\newtheorem{thm}{Theorem}[section]
\newtheorem{cor}[thm]{Corollary}
\newtheorem{lem}[thm]{Lemma}
\newtheorem{prop}[thm]{Proposition}
\numberwithin{equation}{section}
\newcommand{\al}{\alpha}
\newcommand{\de}{\delta}
\newcommand{\De}{\Delta}
\newcommand{\ep}{\varepsilon}
\newcommand{\Om}{\Omega}
\newcommand{\ga}{\gamma}
\newcommand{\R}{\mathbb{R}}
\newcommand{\U}{\mathbb{S}}
\DeclareMathOperator{\vol}{vol}
\newcommand{\Real}{\mathbb{R}}
\newcommand{\norm}[1]{\Vert#1\Vert}
\def\<{\left\langle} \def\>{\right\rangle}
\def\({\left(} \def\){\right)}
\newcommand{\n}{\nabla}
\newcommand{\p}{\partial}
\def\s{\,\,\,\,}
\title[Huber's theorem]{Huber's theorem for manifolds with $L^\frac{n}{2}$ integrable
Ricci curvatures}
\author[B. Chen]{Bo Chen}
\address{Department of Mathematical Sciences, Tsinghua University, Beijing, 100084, China}
\email{chenbo@mail.tsinghua.edu.cn}
\author[Y. Li]{Yuxiang Li}
\address{Department of Mathematical Sciences, Tsinghua University, Beijing, 100084, China}
\email{liyuxiang@mail.tsinghua.edu.cn}
\date{}
\thanks{The first author is partially supported by China Postdoctoral Science Foundation, Grant No. 2021M701930. The second author is partially supported by NSFC 11971451 and NSFC 12141103.}
\dedicatory{\it Dedicated to Prof. Ernst Kuwert for his sixtieth birthday}
\begin{document}
\maketitle

\begin{abstract}
In this paper, we generalize Huber's finite points conformal compactification theorem to 
higher dimensional manifolds, which are conformally compact with $L^\frac{n}{2}$ integrable Ricci curvatures.
\end{abstract}
\maketitle
\section{Introduction}\label{s: intr}

A classical theorem of Huber states that a complete surface, whose negative part of the Gauss curvature is integrable, is conformally equivalent to a compact surface with a finite number of points removed.
By Cohn-Vossen's Theorem, the Gauss curvature is integrable on such a surface.

Naturally, one may wonder what is  a higher dimensional generalization of Huber's theorem.
For a Riemannian manifold $(M,g)$ with $\dim M\geq 3$, we have a number of choices of curvatures, such as scalar curvature $R_g$, Ricci curvature $Ric_g$, sectional curvature $K_g$, and $Q$-curvature $Q_g$.
However, all of the above curvatures vanish on the manifold $M=\mathbb{T}^{n-1}\times \R$, which is obviously not conformally equivalent to
a closed manifold with finitely many points removed. Hence, a Huber type
theorem is not true without additional assumptions. 

So far, many known results concerning Huber's theorems are proved on conformally compact manifolds, i.e., manifolds which are conformal domains of closed manifolds. For example, S. A. Chang, J. Qing, and P. Yang \cite{CQY} proved that, if $(M,g)$ is conformally equivalent to a domain of $\U^4$, and if  
$$
0<C_1<R_g<C_2, \s |\nabla R_g|<C_3, \s Ric_g\geq C_4g,\s
\int_M|Q_g|dV_g<+\infty,
$$ 
for some given constants $C_i$ with $i=1,\cdots,4$, then $\U^4\setminus M$ is a finite set. In the recent paper \cite{MQ}, S. Ma and J. Qing adopted the $n$-Laplace equation to study conformal metrics on a locally conformally flat manifold $(M,g)$. They assume that  either $Ric_g\geq 0$ outside a compact subset or 
$$
Ric_g^{-}\in L^\infty\cap L^1,\s R_g\in L^\infty, \s and\s
\nabla R_g\in L^\infty.
$$
First, they proved that a Huber-type theorem holds when $(M,g)$ is conformal to a domain of $\U^n$ with $n>2$. Then they showed that if  there exists a conformal immersion $\Phi:(M,g)\rightarrow \U^n$ with $n\geq5$, then $\Phi$ is an embedding and 
$\U^n\setminus \Phi(M)$ is a finite set.

In this paper, we will focus on conformally compact manifolds with bounded $L^\frac{n}{2}$-norm of Ricci curvatures. In this direction, the best result we know is due to Carron and Herzlich \cite{CH}.

\begin{thm}[Carron-Herzlich]
Let $\Omega$ be a domain of $(M,g_0)$, a compact Riemannian manifold of
dimension $n > 2$. Assume $\Omega$ is endowed with a complete Riemannian metric $g$
which is conformal to $g_0$. Suppose moreover that

-either the Ricci tensor of $g$ is in $L^\frac{n}{2}(\Omega,g)$ and
\begin{equation}\label{CHV}
\vol_g(B(x_0, r))=
O(r^n\log^{n-1}r)
\end{equation}
for some point $x_0$ in $\Omega$;

-or the positive part $R_g^{+}$ of the scalar curvature of $g$ is in $L^\frac{n}{2}(\Omega,g)$ and, for some point $x_0$ in $\Omega$, $\vol(B(x_0, r),g)=O(r^n)$.

Then there exists finitely many points $\{p_1,p_2,\cdots,p_N\}$ in $M$ such that
$$
\Om=M\setminus\{p_1,p_2,\cdots,p_N\}.
$$
\end{thm}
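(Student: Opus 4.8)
Write $g=w^{4/(n-2)}g_0$ with $w\in C^\infty(\Om)$, $w>0$, and set $K:=M\setminus\Om$, a compact subset of $M$; since $(\Om,g)$ is complete while $g_0$ extends smoothly across $K$, the function $w$ must tend to $+\infty$ on approach to $K$. One has to prove that $K$ is finite. The conformal change of scalar curvature reads $-\tfrac{4(n-1)}{n-2}\De_{g_0}w+R_{g_0}w=R_g\,w^{\frac{n+2}{n-2}}$, and the curvature hypotheses translate into $L^{n/2}(\Om,g_0)$-control of the relevant quantities: in the first alternative $|\mathrm{Ric}_g|\in L^{n/2}(\Om,g)$ gives $|\mathrm{Ric}_g|\,w^{4/(n-2)}\in L^{n/2}(\Om,g_0)$, hence, using the full Ricci transformation law, an $L^{n/2}(g_0)$ bound on the Schouten tensor of $g$; in the second alternative $R_g^{+}\in L^{n/2}(\Om,g)$ gives $R_g^{+}w^{4/(n-2)}\in L^{n/2}(\Om,g_0)$, so the right-hand coefficient above is a Kato-type potential (the negative part of $R_g$ only helps). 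The plan combines a potential-theoretic input, extracting $n$-parabolicity of $(\Om,g)$ from the volume growth, with an $\ep$-regularity input, extracting the precise behaviour of $w$ near $K$ from the smallness of the $L^{n/2}$ curvature.

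First, $n$-parabolicity. The Dirichlet $n$-energy $\phi\mapsto\int|\n\phi|^n\,dV$ is conformally invariant in dimension $n$, so the $n$-capacity of a compact subset of $\Om$, taken relative to $\Om$, is the same for $g$ and for $g_0$. The growth hypothesis on $\vol_g(B(x_0,r))$ sits exactly at, or below, the threshold of the classical volume test $\int^{\infty}\big(r/\vol_g(B(x_0,r))\big)^{1/(n-1)}\,dr=\infty$ for $n$-parabolicity (with $\vol_g\approx r^n\log^{n-1}r$ the integrand is $\approx(r\log r)^{-1}$, with $\vol_g\approx r^n$ it is $\approx r^{-1}$), so $(\Om,g)$ is $n$-parabolic. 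By conformal invariance, for every compact $F\subset\Om$ there exist functions on $M$ equal to $1$ near $K$ and to $0$ near $F$ with arbitrarily small $g_0$-$n$-energy; hence $K$ is polar for the $n$-Laplacian and, in particular, $\mathcal H^n(K)=0$. This by itself does not give finiteness (an $n$-polar set may be Cantor-like); the curvature bound is what will rule out any such accumulation.

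Now the $\ep$-regularity step, which is the core of the argument. The measure $|\mathrm{Ric}_g|^{n/2}\,dV_g$ (respectively $(R_g^{+})^{n/2}\,dV_g$) is finite on $\Om$, so a Vitali-type covering argument produces $\de>0$ and a finite set $\{p_1,\dots,p_N\}\subset K$ such that every $q\in K\setminus\{p_1,\dots,p_N\}$ has a $g_0$-neighbourhood $V$ with $\int_{V\cap\Om}|\mathrm{Ric}_g|^{n/2}\,dV_g<\de$. On such a small-energy region one treats the displayed equation as a linear Schrödinger equation $-\tfrac{4(n-1)}{n-2}\De_{g_0}w+\big(R_{g_0}-R_g w^{4/(n-2)}\big)w=0$ whose potential is small in $L^{n/2}(V,g_0)$; Moser iteration, using a local Sobolev inequality available in this regime, gives the upper bound $w(x)\le C\,d_{g_0}(x,K)^{-(n-2)}$ near $q$, and a Harnack inequality for the positive solution $w$, together with a comparison with the fundamental solution, supplies the matching lower bound — so $g$ behaves near $V\cap K$ just as the round metric near a puncture. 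It follows that the end of $(\Om,g)$ over $V\cap K$ is asymptotic to a metric cone with connected, smooth link, hence — $M$ being a smooth manifold — a round sphere, so $V\cap K$ is a single point. Therefore $K\setminus\{p_1,\dots,p_N\}$ is discrete and, being closed in the compact set $K$, finite; inspecting each of the finitely many $p_i$ (again via $n$-parabolicity and the finiteness of the curvature and volume budgets) shows $p_i$ too is an isolated point of $M\setminus\Om$. Hence $\Om=M\setminus\{p_1,\dots,p_N\}$.

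The delicate point is the $\ep$-regularity at the borderline exponent $n/2$: since $L^{n/2}$ is the scale-invariant space for the curvature, a naive Moser iteration does not close, and one must genuinely exploit smallness of the scale-invariant curvature energy (through an $\ep$-regularity Sobolev/Yamabe inequality, or a Kato-class potential estimate) to pass from a small $L^{n/2}$ curvature norm to a pointwise power-type bound on $w$. It is exactly here that controlling only $R_g^{+}$ in the second alternative costs the stronger $O(r^n)$ volume hypothesis, whereas the full Ricci bound can afford the almost-maximal growth $O(r^n\log^{n-1}r)$. A secondary difficulty is the book-keeping that upgrades the statement ``$K$ is $n$-polar and carries locally conical ends'' to ``$K$ is discrete'', uniformly over $K$: one must feed the local conical structure back into the finiteness of the total curvature, since a positive-dimensional piece of $K$, or infinitely many ends, would consume more than the available curvature and volume budgets — together with the removable-singularity analysis at the exceptional points $p_i$.
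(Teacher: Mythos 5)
This statement is quoted from Carron--Herzlich \cite{CH}; the paper under review does not prove it (its own contribution, Theorem \ref{main}, removes the volume hypothesis by a completely different blow-up argument), so there is no in-paper proof to compare with. Your outline is broadly in the spirit of the original potential-theoretic proof: $n$-parabolicity from the volume growth and conformal invariance of the Dirichlet $n$-energy, hence polarity of $K=M\setminus\Om$, followed by an $\varepsilon$-regularity analysis of the conformal scalar curvature equation at the critical exponent $n/2$. That much of the plan is sound, and the volume-test computation at the threshold $r^n\log^{n-1}r$ is correct.

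However, as a proof there are genuine gaps at exactly the decisive points. First, the step from ``small $L^{n/2}$ curvature on a neighbourhood $V$ with $V\cap K$ polar'' to ``$V\cap K$ is a single point'' is asserted, not proved: even granting the upper bound $w\le C\,d_{g_0}(x,K)^{-(n-2)}$ from Moser iteration with a small critical potential, the matching lower bound does not follow just from completeness (completeness only gives divergence of $\int w^{2/(n-2)}$ along curves into $K$, not a pointwise power-type lower bound, and ``comparison with the fundamental solution'' presupposes structure of $K$ that is not yet known), and, more seriously, two-sided bounds $w\approx d(\cdot,K)^{2-n}$ for an a priori Cantor-like polar set $K\cap V$ do not yield an asymptotically conical end with smooth connected link; ruling out such accumulation is precisely the content of the theorem, and your sketch assumes it where it matters. (For contrast, the present paper handles the analogous difficulty for its Theorem \ref{main} via the classification $u_\infty=c_0|x-x_0|^{2-n}$ of conformal Ricci-flat blow-up limits together with the observation that a single segment of infinite length determines the pole $x_0$ uniquely, which is what excludes two distinct accumulation points.) Second, the finitely many exceptional points $p_1,\dots,p_N$ produced by the Vitali covering, where the local curvature energy is not small, are dismissed in one clause; showing that these too are isolated points of $M\setminus\Om$ is the other half of the theorem and requires its own argument. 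As written, the proposal is a reasonable road map toward the Carron--Herzlich result, but not a proof.
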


\begin{thm}[Carron-Herzlich]\label{CH1}
Let $\Omega$ be a domain of the sphere $(\mathbb{S}^4, g_{\mathbb{S}^4})$ endowed with a complete
metric $g$ conformal to $g_{\mathbb{S}^4}$. Assume moreover $Ric_g\in L^2(\Omega, g)$ and
\begin{equation}\label{noncritical}
\s R_g\in L^\frac{4}{3}\cap L^{2(1+\delta)}(\Omega,g)
\mbox{ for some }\delta\in(0,1).
\end{equation}
Then there is a finite set $\{p_1,\cdots,p_N\}$ such that $\Omega = \mathbb{S}^4\setminus\{p_1,...,p_N\}$.
\end{thm}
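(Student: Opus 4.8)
The plan is to reduce Theorem~\ref{CH1} to the first Carron--Herzlich theorem quoted above. Here $n=4$, so $Ric_g\in L^{2}(\Omega,g)$ is exactly $Ric_g\in L^{n/2}(\Omega,g)$, and it therefore suffices to prove the volume bound $\vol_g(B(x_0,r))=O(r^{4}\log^{3}r)$ for one point $x_0\in\Omega$; that theorem then delivers the finite exceptional set. So everything comes down to extracting this (in fact Euclidean-type) volume growth from \eqref{noncritical} together with $Ric_g\in L^{2}$. A preliminary remark: interpolating the two spaces in \eqref{noncritical} (valid since $\tfrac43<2<2(1+\delta)$) gives $R_g\in L^{2}(\Omega,g)$, so $R_g^{2}\in L^{1}(\Omega,g)$; combined with $|Ric_g|^{2}\in L^{1}(\Omega,g)$ and the dimension-four identity $Q_g=-\tfrac16\Delta_gR_g-\tfrac12|Ric_g|^{2}+\tfrac16R_g^{2}$, this reduces the finiteness of the total $Q$-curvature to the control of the divergence term $\Delta_gR_g$, on which the remaining information in \eqref{noncritical} will be spent.

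Next I would localize near $K:=\mathbb{S}^{4}\setminus\Omega$, where all the $g$-volume of $\Omega$ concentrates. Writing $g=e^{2v}g_{\mathbb{S}^{4}}$, the function $v$ solves the Paneitz--$Q$-curvature equation
\[
P_{g_{\mathbb{S}^{4}}}v+Q_{g_{\mathbb{S}^{4}}}=Q_g\,e^{4v}\qquad\text{on }\ \Omega,
\]
with $P_{g_{\mathbb{S}^{4}}}$ the Paneitz operator (leading term the bi-Laplacian $\Delta_{g_{\mathbb{S}^{4}}}^{2}$) and $Q_{g_{\mathbb{S}^{4}}}$ a positive constant. In a conformal, stereographic-type chart around a boundary piece this becomes $\Delta^{2}v=\mathrm{const}\cdot Q_ge^{4v}$ on an exterior domain $\mathbb{R}^{4}\setminus B_{1}$; since the fundamental solution of $\Delta^{2}$ in $\mathbb{R}^{4}$ is \emph{logarithmic}, this four-dimensional situation behaves like Huber's two-dimensional one. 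Thus, once the right-hand side is known to have finite total mass near the end, the potential-theoretic analysis of the equation --- Huber's logarithmic method in the four-dimensional form of Chang--Qing--Yang \cite{CQY} --- yields at each end $v(x)=\alpha\log|x|+O(1)$ as $|x|\to\infty$, with $\alpha\ge-1$ forced by completeness of $g$; a direct computation then converts this into $\vol_g(B_g(x_0,r))=O(r^{4})$ on that end. Finiteness of $K$ is then either read off from the first Carron--Herzlich theorem once the global bound $\vol_g(B(x_0,r))=O(r^{4}\log^{3}r)$ is established, or obtained en route by an $\varepsilon$-regularity argument: there is a universal $\varepsilon_{0}>0$ so that any end with $\int_{\text{end}}|Ric_g|^{2}\,dV_g<\varepsilon_{0}$ reduces to a single removable puncture, and only finitely many ends can exceed $\varepsilon_{0}$ because $\int_{\Omega}|Ric_g|^{2}\,dV_g<\infty$.

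The crux is carrying out these two steps rigorously in spite of the criticality of the problem: the nonlinearity $e^{4v}$ and the integrabilities $Ric_g,R_g\in L^{2}$ are all borderline, so the estimate for $v$ depends on the total $Q$-curvature near an end, which in turn depends on $v$, and the loop does not close by itself. This is exactly where the two halves of \eqref{noncritical} enter, for different purposes. The low-integrability tail $R_g\in L^{4/3}=L^{n/(n-1)}$ controls the conformal factor on dyadic scales: for $\psi=e^{v}$ the Yamabe equation reads $\Delta\psi=-\tfrac16 R_g\psi^{3}$, whose right-hand side $-\tfrac16 R_ge^{3v}$ lies in $L^{4/3}$ of the relevant annuli precisely because $\int|R_g|^{4/3}e^{4v}\,dx=\int|R_g|^{4/3}\,dV_g<\infty$, so Calder\'on--Zygmund estimates and Sobolev embedding put the Newtonian-potential part of $\psi$ in $L^{4}=L^{n}$ there --- an a priori bound on the $g$-volume of those scales that rules out ends expanding faster than the model puncture. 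The strictly supercritical integrability $R_g\in L^{2(1+\delta)}$ --- strictly more than the $L^{2}$ one gets for free --- then supplies the small extra room needed to close the Moser-type iteration that upgrades this to pointwise/decay control, and, crucially, to absorb the boundary flux of $\nabla R_g$ generated by the $\Delta_gR_g$ term when the $Q$-curvature identity is integrated over an exhaustion of $\Omega$. Taming this $\Delta_gR_g$ contribution near $K$ without a priori knowledge of $v$ --- thereby breaking the circular dependence between the estimate for $v$ and the total $Q$-curvature --- is the step I expect to be the main obstacle.
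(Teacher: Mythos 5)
Your proposal is an outline rather than a proof, and the decisive steps are exactly the ones left open. First, there is a structural circularity: you pass to ``a conformal, stereographic-type chart around a boundary piece'' in which $\Omega$ becomes an exterior domain $\mathbb{R}^4\setminus B_1$ and $v$ has an expansion $\alpha\log|x|+O(1)$ at infinity. This already presupposes that the boundary piece is an isolated point of $\mathbb{S}^4\setminus\Omega$, i.e.\ precisely the discreteness the theorem is supposed to establish; if $\mathbb{S}^4\setminus\Omega$ were a continuum (a curve, a Cantor set), there is no such chart and no ``end'' of puncture type to analyze. The same objection applies to the alternative route you sketch, the claimed $\varepsilon$-regularity statement that ``any end with $\int|Ric_g|^2<\varepsilon_0$ reduces to a single removable puncture'': that assertion is itself a theorem of the same depth as the one to be proved, and nothing in the proposal indicates how to obtain it. Second, you candidly identify the analytic crux --- controlling the $\Delta_g R_g$ contribution to the total $Q$-curvature near $\mathbb{S}^4\setminus\Omega$ without a priori knowledge of $v$, so as to break the loop between the estimate for $v$ and the finiteness of $\int Q_g\,dV_g$ --- and then leave it unresolved. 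Since both the reduction to the first Carron--Herzlich theorem (via the volume bound) and the direct Chang--Qing--Yang-type asymptotics hinge on closing that loop, the argument as written does not establish the statement. (Minor point: your $Q$-curvature identity differs by a factor of $2$ from the normalization \eqref{Q.def} used here; harmless, but worth fixing.)

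For comparison, the present paper does not prove Theorem \ref{CH1} along these lines at all: it is quoted from Carron--Herzlich as background, and within this paper it is subsumed by Theorem \ref{main}, which in dimension $4$ needs only $Ric_g\in L^2(\Omega,g)$ and discards \eqref{noncritical} entirely. The mechanism there is completely different from your $Q$-curvature/volume-growth strategy: a contradiction and blow-up argument near a hypothetical accumulation point of $M\setminus\Omega$, an $\epsilon$-regularity statement for the scalar curvature equation giving local $W^{2,q}$ convergence of normalized conformal factors, weak convergence of Ricci tensors (Lemma \ref{curvature.convergence}) forcing the limit to be conformally Ricci flat, the Liouville-type classification $u_\infty=c_0|x-x_0|^{2-n}$ (Proposition \ref{bble}), and the observation that completeness forces the pole $x_0$ to be the blow-up point --- which cannot hold simultaneously for two distinct accumulation directions. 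If you want a proof of Theorem \ref{CH1} that avoids the circularities above, that route (or the original Carron--Herzlich argument, where hypotheses \eqref{noncritical} are used to control the volume growth before any end structure is assumed) is the one to follow.
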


Recently,  Carron \cite{C20} obtained a new volume estimate
under some integral bounds of the negative part of the Ricci curvature and improved Theorem \ref{CH1} to the following version.

\begin{thm}[Carron]
Let $\Omega$ be a domain of $(M,g_0)$, a compact Riemannian manifold of
dimension $n > 2$. Assume $\Omega$ is endowed with a complete Riemannian metric $g$
which is conformal to $g_0$. Suppose moreover that for some $\nu>\frac{n}{2}$,
$$
\int_\Omega|Ric_g|^\frac{n}{2}dV_g<+\infty
$$
and
\begin{equation}\label{Ric-}
\int_\Omega|Ric^{-}_g|^\nu dV_g<+\infty. 
\end{equation}
Then there exists finitely many points $\{p_1,p_2,\cdots,p_N\}$ in $M$ such that
$$
\Om=M\setminus\{p_1,p_2,\cdots,p_N\}.
$$
\end{thm}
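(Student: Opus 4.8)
The plan is to verify the hypotheses of the first theorem of Carron--Herzlich stated above. One of them, $Ric_g\in L^{n/2}(\Om,g)$, is part of the present assumptions; the other is the volume growth \eqref{CHV}, namely $\vol_g(B(x_0,r))=O(r^n\log^{n-1}r)$ for some $x_0\in\Om$, and extracting it from the curvature hypotheses is the real content here --- this is essentially the volume estimate of \cite{C20}.

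To obtain \eqref{CHV} I would argue through the conformal compactification. Write $g=u^{4/(n-2)}g_0$ on $\Om=M\setminus\Sigma$, where $g_0$ is smooth on the closed manifold $M$ and $\Sigma:=M\setminus\Om$ is compact; completeness of $g$ forces $u$ to be unbounded, indeed to blow up at a definite rate, as one approaches $\Sigma$. The conformal transformation laws turn $Ric_g\in L^{n/2}(\Om,g)$ and \eqref{Ric-} into integral bounds, taken with respect to $g_0$, on $u$ and on $\nabla\log u$ near $\Sigma$; inserting these into a Moser-type iteration for the second-order elliptic equation satisfied by $u$ produces pointwise control on how fast $u$ can blow up at a point of $\Sigma$. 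Here the supercriticality $\nu>\tfrac n2$ of \eqref{Ric-} is essential: the critical exponent $\tfrac n2$, which comes for free from $Ric_g\in L^{n/2}$, lies exactly at the borderline of De Giorgi--Nash--Moser theory where such an iteration fails, and one needs the extra room afforded by $\nu>\tfrac n2$ to close it. Since, up to multiplicative constants, $\vol_g(B(x_0,r))$ equals the $g_0$-integral of $u^{2n/(n-2)}$ over the complement of a $g_0$-neighbourhood of $\Sigma$ that shrinks as $r\to\infty$, the admissible blow-up rate of $u$ translates directly into a growth rate for $\vol_g(B(x_0,r))$; the borderline rate --- the fastest blow-up of $u$ consistent with the curvature bounds --- is precisely what produces the factor $\log^{n-1}r$.

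Once \eqref{CHV} is available, the first theorem of Carron--Herzlich applies verbatim and, together with $Ric_g\in L^{n/2}(\Om,g)$, yields finitely many points $p_1,\dots,p_N\in M$ with $\Om=M\setminus\{p_1,\dots,p_N\}$. The heart of the matter is the volume estimate of the second paragraph: the elliptic control of the conformal factor near $\Sigma$ and the isolation of the sharp logarithmic power $n-1$, which is the technical core of \cite{C20}. The remaining ingredients --- the reduction itself, and the propagation of a volume bound from one base point to all base points --- are routine.
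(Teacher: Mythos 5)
First, a caveat on what you are being compared against: the paper does not prove this statement at all --- it is quoted as prior work from \cite{C20}, where it is obtained exactly by the reduction you describe, namely a new volume estimate derived from \eqref{Ric-} combined with the earlier Carron--Herzlich theorem. So your skeleton (verify the volume-growth hypothesis, then apply Carron--Herzlich; note that with $Ric_g\in L^{n/2}(\Om,g)$ one also has $R_g^+\in L^{n/2}(\Om,g)$, so either alternative of that theorem is available) is the right one and matches how the result is actually established.

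The genuine gap is in your proposed mechanism for the volume estimate. Carron's estimate in \cite{C20} is intrinsic: from $\int_\Om|Ric_g^-|^\nu dV_g<\infty$ with $\nu>\frac n2$ he deduces Euclidean volume growth $\vol_g(B(x_0,r))=O(r^n)$ by integral Ricci comparison/Kato-type arguments on the complete manifold $(\Om,g)$, with no use of the conformal structure; in particular the factor $\log^{n-1}r$ in \eqref{CHV} is slack in the Carron--Herzlich hypothesis, not the output of a borderline blow-up rate of $u$. Your alternative route through the conformal factor does not close as sketched. To bound $\vol_g(B(x_0,r))$ you need an upper bound on $u$ near $\Sigma=M\setminus\Om$, i.e.\ a subsolution (Moser) estimate for the equation \eqref{eq-conf}, whose potential is $c_n(|R_{g_0}|+R_g^+u^{\frac{4}{n-2}})$; its relevant norm is $\|R_g^+\|_{L^{n/2}(\Om,g)}$, which under your hypotheses is only critical. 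The supercritical information \eqref{Ric-} concerns $Ric_g^-$, and after conformal translation it controls the \emph{negative} part of the scalar curvature (plus Hessian terms of $\log u$), which enters the subsolution inequality with the harmless sign and is not the obstruction; so the ``extra room afforded by $\nu>\frac n2$'' that you invoke to close the iteration is not actually available in that equation. A lower bound on Ricci in an integral sense yields volume upper bounds through comparison geometry (Bishop--Gromov/Petersen--Wei type), not through the conformal scalar-curvature equation, and this is precisely what \cite{C20} does. As written, the step that is ``the real content'' of the theorem is therefore either deferred entirely to \cite{C20} or argued by a method that would fail.
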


The main purpose of this paper is to show that \eqref{CHV}, \eqref{noncritical} and \eqref{Ric-} in the above theorems can be removed. Our
result
is stated as follows.
\begin{thm}\label{main}
Let $\Om$ be a domain in a Riemannian manifold $(M^n,g_0)$ with $n>2$. Suppose that $g$ is a complete metric defined on $\Omega$ which is conformal to $g_0$ and satisfies
\begin{equation}\label{b-ric}
\int_{\Om}|Ric_g|^{\frac{n}{2}}dV_g<\infty.
\end{equation}
Then $M\setminus\Om$ is discrete.

In particular,  if $(M,g_0)$ is a closed Riemannian manifold, then there exists finitely many points $\{p_1,p_2,\cdots,p_N\}$ in $M$ such that
\[\Om=M\setminus\{p_1,p_2,\cdots,p_N\}.\]
\end{thm}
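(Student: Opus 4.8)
The plan is to prove the stronger, local statement that every $p\in\partial\Om$ is isolated in $M\setminus\Om$, i.e. that for a sufficiently small $g_0$-geodesic ball $B=B_{g_0}(p,r)$ one has $B\cap(M\setminus\Om)=\{p\}$; since $M\setminus\Om$ is closed (and compact when $M$ is), summing over $p$ then yields the theorem. Fix such a $p$, work in a chart where $g_0$ is uniformly comparable to the Euclidean metric on $B$, and write $g=u^{\frac{4}{n-2}}g_0$ with $u>0$ smooth on $\Om$. I would record two preliminary facts. First, from the pointwise scalings $|Ric_g|_g=u^{-\frac{4}{n-2}}|Ric_g|_{g_0}$ and $dV_g=u^{\frac{2n}{n-2}}dV_{g_0}$ one gets $\int_\Om|Ric_g|_{g_0}^{n/2}\,dV_{g_0}=\int_\Om|Ric_g|_g^{n/2}\,dV_g<\infty$, so $Ric_g$, viewed as a tensor on $(\Om,g_0)$, lies in $L^{n/2}$ and hence concentrates at only finitely many points of $\overline\Om$. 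Second, completeness of $g$ forces $u\to+\infty$ along every curve in $\Om$ converging to $\partial\Om$, with an integral lower bound on the rate from the divergence of $\int u^{\frac{2}{n-2}}$; a log-cutoff/capacity estimate (using that near $\partial\Om$ the curvature bound controls the $g_0$-Hessian of $\log u$) will show as a first step that $M\setminus\Om$ has vanishing $2$-capacity, hence empty interior and vanishing $(n-2)$-Hausdorff measure. Thus $K:=B\cap(M\setminus\Om)$ is a compact polar set and the task is to upgrade ``polar'' to ``a single point''.

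The engine is an $\ep$-regularity statement: there is $\ep_0>0$ such that $\int_{B_g(x,2\rho)}|Ric_g|^{n/2}\,dV_g<\ep_0$ implies a lower bound $\gtrsim\rho$ on the harmonic radius of $g$ at $x$, hence a scale-invariant Harnack inequality for $u$ on $B_g(x,\rho)$. Finiteness of the total curvature leaves only finitely many concentration points $q_1,\dots,q_m\in\overline\Om$. For $p\notin\{q_1,\dots,q_m\}$ I would shrink $B$ so that $\int_{B\setminus K}|Ric_g|_{g_0}^{n/2}\,dV_{g_0}<\ep_0$; then Harnack together with the conformal Laplacian equation
\[
-\tfrac{4(n-1)}{n-2}\Delta_{g_0}u+R_{g_0}u=R_g\,u^{\frac{n+2}{n-2}},
\]
whose right-hand side is controlled by the curvature hypothesis, pins down $u$ up to bounded factors as the $g_0$-Green's function with pole on $K$. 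The $L^{n/2}$-bound is used crucially here to exclude cylindrical (Fowler-type) singular behaviour, whose end carries infinite $\int|Ric_g|^{n/2}\,dV_g$; what remains is the Green-type profile $u\asymp d_{g_0}(\cdot,p)^{2-n}$, which is singular only at an isolated point. Hence $u$ is smooth and finite on $B\setminus\{p\}$, so $K=\{p\}$ and $p$ is isolated in $M\setminus\Om$; this settles all boundary points except the $q_j$.

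For a concentration point $q=q_j$ I would blow $g$ up at $q$ along scales $\ld_k\to 0$; by $\ep$-regularity away from the single point $q$, the rescaled metrics subconverge in the pointed Cheeger--Gromov sense to a complete $n$-manifold with $L^{n/2}$-integrable Ricci curvature that is conformally flat near infinity, and finiteness of the energy forces this limit to be asymptotically conical with one end — equivalently, the end of $\Om$ at $q$ has finite total curvature and a single connected tangent structure. Since each stage of the resulting bubble tree absorbs at least $\ep_0$ of curvature energy, the tree terminates after finitely many steps, and one concludes that the end of $\Om$ at $q$ is again a single puncture. Hence every point of $M\setminus\Om$ is isolated, and when $M$ is closed a closed discrete subset is finite, giving the last assertion.

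The hard part is the $\ep$-regularity — and, through it, the Green's-function asymptotics of $u$ — precisely because the exponent $\frac n2$ is critical: the Moser iteration for the conformal factor just fails to close, which is exactly why \cite{CH} and \cite{C20} imposed the extra conditions \eqref{CHV}, \eqref{noncritical}, \eqref{Ric-}. What replaces them here is that $\Om$ is conformal to a domain of the \emph{compact} manifold $(M,g_0)$: conformal covariance of the Yamabe functional yields a Sobolev inequality $\|f\|_{L^{2n/(n-2)}(\Om,g)}^2\le Y\int_\Om\bigl(|\n f|_g^2+c_n R_g f^2\bigr)\,dV_g$ for all $f\in C_c^\infty(\Om)$, with $Y$ depending only on $(M,g_0)$, and on any region where $\|R_g\|_{L^{n/2}(g)}$ is small the curvature term is absorbed to give a Euclidean-type Sobolev inequality. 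Inserting this, in place of Carron's a priori volume bounds, into the Bochner formula for $|Ric_g|^{\frac{n-2}{n-1}}$ in its refined (Kato-improved) form should supply the missing estimate in the borderline case.
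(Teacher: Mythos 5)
Your outline is a genuinely different strategy from the paper's, but it rests on two steps that are not proved and, as stated, cannot be taken for granted. The first and decisive one is your ``engine'': an $\epsilon$-regularity asserting that $\int_{B_g(x,2\rho)}|Ric_g|^{n/2}dV_g<\epsilon_0$ yields a harmonic-radius lower bound $\gtrsim\rho$ (hence scale-invariant Harnack, Green-function asymptotics, and pointed Cheeger--Gromov convergence of your blow-ups). This is exactly the statement that is unavailable at the critical exponent: Anderson-type harmonic radius estimates require $\|Ric\|_{L^p}$ with $p>\frac n2$ \emph{together with} a noncollapsing/volume hypothesis, and the absence of such control is precisely why \cite{CH} assumed \eqref{CHV} or \eqref{noncritical} and \cite{C20} assumed \eqref{Ric-}. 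You acknowledge this is ``the hard part'' but offer only that a Kato-improved Bochner inequality for $|Ric_g|^{\frac{n-2}{n-1}}$, combined with the Yamabe--Sobolev inequality inherited from $(M,g_0)$, ``should supply'' the missing estimate; note that the Weitzenb\"ock formula for the Ricci tensor involves the full curvature operator, which is not controlled by $\|Ric_g\|_{L^{n/2}}$, so it is far from clear this closes, and nothing in your sketch substitutes for it. Without this step, the Harnack inequality, the identification of $u$ with a Green's function, the Cheeger--Gromov limits at concentration points, and the bubble-tree termination all lack a foundation. (Smaller issues of the same kind: completeness gives divergence of $\int u^{2/(n-2)}$ along curves to $\partial\Omega$, not pointwise $u\to\infty$; and the capacity/empty-interior claim is asserted, not proved.)

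The second gap is a circularity in the classification of the singular behaviour. The dichotomy ``cylindrical (Fowler) versus Green-type'' is a statement about \emph{isolated} singularities of equations with controlled scalar curvature; here $K=B\cap(M\setminus\Omega)$ is a priori an arbitrary compact set, and concluding $u\asymp d_{g_0}(\cdot,p)^{2-n}$, ``singular only at an isolated point, hence $K=\{p\}$'', presupposes the very conclusion. The same happens at the concentration points, where ``finite energy forces an asymptotically conical limit with one end / a single puncture'' is again the statement to be proved. For comparison, the paper's mechanism avoids any critical-exponent geometric regularity: it uses only $W^{2,q}_{loc}$, $q<\frac n2$, convergence of suitably normalized conformal factors (Corollary \ref{convergence}), distributional convergence of Ricci (Lemma \ref{curvature.convergence}), the Liouville classification $u_\infty=\mathrm{const}$ or $c_0|x-x_0|^{2-n}$ (Proposition \ref{bble}), and the rigidity of Theorem \ref{uni-bubble}: an infinite-length segment reaching a boundary point forces the blow-up limit to be the inverted Euclidean metric centered exactly at that endpoint, so the pole is unique; the elementary Lemma \ref{EG} then produces two distinct such poles at any accumulation point of $M\setminus\Omega$, a contradiction. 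To turn your proposal into a proof you would need to actually establish the critical $\epsilon$-regularity (or replace it) and give a non-circular argument for general closed singular sets.
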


As a corollary, we have
\begin{cor}
If $(M,g)$ is a complete Riemannian manifold with 
\[\int_M|Ric_g|^\frac{n}{2}dV_g<+\infty,\] 
then $(M,g)$
is not conformal to the interior of any compact manifold with boundary.
\end{cor}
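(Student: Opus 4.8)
\emph{Proof proposal.} The plan is to deduce the corollary directly from Theorem~\ref{main} by realizing $M$ as a conformal domain of a \emph{closed} manifold whose complement is manifestly not discrete. First I would argue by contradiction: suppose $(M,g)$ is conformal to the interior of a compact manifold with boundary, i.e.\ there is a compact smooth $n$-manifold $\bar N$ with $\partial N\neq\emptyset$, a smooth metric $\bar g$ on $\bar N$, and a diffeomorphism $\phi\colon M\to\mathrm{int}(\bar N)$ with $\phi_*g=e^{2u}\,\bar g|_{\mathrm{int}(\bar N)}$ for some $u\in C^\infty(\mathrm{int}\bar N)$. Identifying $M$ with $\mathrm{int}(\bar N)$ via $\phi$, I may assume $M=\mathrm{int}(\bar N)$ and $g=e^{2u}\,\bar g|_{M}$.

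Next I would pass to the double $M_0=D\bar N$, obtained by gluing two copies of $\bar N$ along $\partial N$; after the standard smoothing across a collar of $\partial N$ this is a closed smooth $n$-manifold in which $\bar N$ sits as a codimension-zero submanifold with boundary, so $\Omega:=\mathrm{int}(\bar N)$ is an open domain in $M_0$. Extending $\bar g$ to a smooth metric $g_0$ on $M_0$ (for instance by choosing $\bar g$ to be a product near the collar and doubling, or simply via a partition of unity), the metric $g=e^{2u}g_0|_\Omega$ is conformal to $g_0$ on $\Omega$, is complete by hypothesis, and satisfies $\int_\Omega|Ric_g|^{n/2}\,dV_g=\int_M|Ric_g|^{n/2}\,dV_g<\infty$. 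Theorem~\ref{main}, applied with the closed manifold $(M_0,g_0)$, then forces $M_0\setminus\Omega$ to be a finite set of points. But by construction $M_0\setminus\Omega=\partial N\sqcup\mathrm{int}(\bar N')$, where $\bar N'$ is the mirror copy of $\bar N$ in the double; this set contains the nonempty closed hypersurface $\partial N$ of dimension $n-1\ge1$, hence is infinite, a contradiction.

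I do not expect a genuine obstacle here, since all the analytic content is already in Theorem~\ref{main}: the remaining steps (the double construction, the extension of $\bar g$ to $g_0$, and checking that completeness and the Ricci integral are preserved under the identification $M\cong\mathrm{int}\bar N$) are routine. The only points needing care are the reading of ``compact manifold with boundary'' as having \emph{nonempty} boundary --- otherwise $\bar N$ could be closed and equal to $M$, and the statement would be vacuously false --- and the standing hypothesis $n=\dim M>2$ required to invoke Theorem~\ref{main}; the case $n=2$ is exactly Huber's classical theorem.
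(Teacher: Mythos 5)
Your argument is correct and is essentially the deduction the paper intends (it states the corollary without a separate proof): realize $\Omega=\mathrm{int}\,\bar N$ as a domain in the closed double $D\bar N$, apply Theorem \ref{main}, and observe that $D\bar N\setminus\Omega$ contains the closed hypersurface $\partial N$ of dimension $n-1\geq 2$, hence cannot be a finite (or even discrete) set. One small correction: you cannot ``choose $\bar g$ to be a product near the collar,'' since $\bar g$ is given; instead extend $\bar g$ smoothly across a collar of $\partial N$ inside the double and patch with an arbitrary smooth metric via a cutoff equal to $1$ on all of $\bar N$, so that $g_0|_\Omega=\bar g|_\Omega$ and the conformality of $g$ to $g_0$ on $\Omega$ is genuinely preserved.
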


The method of  proof of Theorem \ref{main} is quite different from those of \cite{CH,CQY,MQ}.
The arguments in this paper are based on a new observation stated as follows. Let $L_{(x_0, y_0]}=\{(1-t)x_0+ty_0:t\in(0,1]\}$ be a segment in $\R^n$ and
$U\subset \R^n$ be a connected open set. We assume  that $L_{(x_0, y_0]}\subset U$ and $x_0\notin U$. We find that if a conformal metric sequence $\{g_k=u_k^\frac{4}{n-2}g_{euc}\}$ converges to a Ricci flat metric $g_\infty=u_\infty^\frac{4}{n-2}g_{euc}$ in $U$, and if the length of $L_{(x_0,y_0]}$ with respect to the metric $g_k$ is infinity, namely $
\text{Length}(L_{(x_0, y_0]},g_k)=+\infty$, then
$$
u_\infty=\frac{c_0}{|x-x_0|^{n-2}}
$$
in $U$ for some $c_0>0$. This also means that such an $x_0$ is in fact unique. Indeed, if $L_{(x_0',y_0']}\subset U$ is another segment with $x_0'\notin U$ and $\text{Length}(L_{(x_0,y_0]},g_k)=+\infty$, then we also have
$$
u_\infty=\frac{c_0}{|x-x_0'|^{n-2}}
$$
for some $c_0'$. This implies that $x_0=x_0'$.

Theorem \ref{main} will be proved by a contradiction argument. If Theorem \ref{main} is false, then $M\setminus\Omega$ has at least one limit point. Blowing up $g$ near  a limit point in local coordinates, we will get a connected open set $U\subset\R^n$, and a metric sequence $\{g_k\}$ converging to a Ricci flat metric in $U$. Moreover, there are  at least two segments $L_{(x_0,y_0]}$, $L_{(x_0', y_0']}\subset U$, such that   $x_0$, $x_0'\notin U$, $x_0\neq x_0'$, and
$$
\text{Length}(L_{(x_0,y_0]},g_k),\s \text{Length}(L_{(x_0',y_0']},g_k)=+\infty.
$$
So, we obtain a 
contradiction.

Note that we only use the fact that
$Ric_{g_k}$ converges to 0 locally in the above arguments. Recall that the $Q$-curvature on a 4-dimensional manifold is defined as (cf \cite{AC})
\begin{equation}\label{Q.def}
Q_g=-\frac{1}{12}\De_g R_g-\frac{1}{4}|Ric_g|^2+\frac{1}{12}R^2_g.
\end{equation}
Then $R_g=Q_g^{-}=0$ implies that $Ric_g=0$. Hence we have the following result.

\begin{thm}\label{Q-curvature}
Let $(M,g_0)$ be a 4-dimensional Riemannian manifold. Let $\Om$ be a domain in $M$ and $g=u^2g_0$ be a complete metric defined on $\Omega$, which satisfies
$$
\int_\Om(|R_g|^2+|Q_g^{-}|)dV_g<+\infty.
$$
Then $M\setminus\Omega$ is discrete.
\end{thm}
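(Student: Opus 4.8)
The plan is to reproduce, almost word for word, the contradiction-and-blow-up scheme used for Theorem~\ref{main}, the only genuinely new point being to check that the blow-up limit is again Ricci flat. So I would assume $M\setminus\Om$ is not discrete, pick a limit point $p\in M\setminus\Om$, work in normal coordinates centered at $p$, and rescale $g$ exactly as in the proof of Theorem~\ref{main} to produce a connected open set $U\subset\R^4$, conformal factors $u_k$ with $g_k=u_k^2g_{euc}$ on $U$, and two segments $L_{(x_0,y_0]},L_{(x_0',y_0']}\subset U$ with $x_0\ne x_0'$, $x_0,x_0'\notin U$, and $\text{Length}(L_{(x_0,y_0]},g_k),\ \text{Length}(L_{(x_0',y_0']},g_k)\to+\infty$.

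Next I would use scale invariance in dimension four: each of $\int|R_g|^2\,dV_g$ and $\int|Q_g^-|\,dV_g$ is unchanged under $g\mapsto c^2g$, so under the rescaling they are carried onto the same integrals over shrinking neighborhoods of $p$; since these integrals are finite by hypothesis, their restrictions to any fixed compact $K\Subset U$ tend to $0$ by absolute continuity of the integral. Passing to a limit $g_\infty=u_\infty^2g_{euc}$ on $U$ then gives $R_{g_\infty}\equiv0$ and $Q_{g_\infty}^-\equiv0$. Substituting $R_{g_\infty}\equiv0$ into \eqref{Q.def} yields $Q_{g_\infty}=-\tfrac14|Ric_{g_\infty}|^2\le0$, while $Q_{g_\infty}^-\equiv0$ says $Q_{g_\infty}\ge0$; hence $Q_{g_\infty}\equiv0$ and $Ric_{g_\infty}\equiv0$, i.e.\ $g_\infty$ is Ricci flat on $U$. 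With this in hand, the observation of the introduction applies to each of the two segments and forces $u_\infty=c_0|x-x_0|^{-2}=c_0'|x-x_0'|^{-2}$ on $U$, hence $x_0=x_0'$ --- the desired contradiction.

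The hard part, I expect, is not this soft limiting step but making sure the blow-up actually converges to a limit regular enough for \eqref{Q.def} to be meaningful, i.e.\ extracting a $C^2_{loc}$ (or $W^{2,p}_{loc}$) convergent subsequence of $\{u_k\}$. In Theorem~\ref{main} that compactness is powered by the local $L^{n/2}$ bound on $Ric_{g_k}$, whereas here I must instead manufacture a local $L^2$ bound on $Ric_{g_k}$ from the present hypotheses. I would obtain it by integrating the pointwise identity $\tfrac14|Ric_g|^2=\tfrac1{12}R_g^2-\tfrac1{12}\Delta_gR_g-Q_g$ against a nonnegative cutoff $\phi$ supported in a coordinate ball: the $Q_g$ term contributes $-\int Q_g\,\phi\,dV_g\le\int Q_g^-\,\phi\,dV_g$ because $Q_g^+\ge0$; the term $\tfrac1{12}\int R_g^2\,\phi\,dV_g$ is controlled by hypothesis; and $-\tfrac1{12}\int(\Delta_gR_g)\,\phi\,dV_g$ is integrated by parts onto $\phi$ via $\Delta_gf=u^{-2}\big(\Delta_{g_{euc}}f+2\langle\nabla\log u,\nabla f\rangle\big)$. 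The resulting $\Delta_g\phi$-type term still involves $u_k$, the very object being controlled, so this is where the real work lies: I would first extract crude bounds on $u_k$ from the completeness of $g$ together with the scalar-curvature equation for $u_k$, and then bootstrap. Once the local $L^2$ bound on $Ric_{g_k}$ is available, the regularity machinery already used for Theorem~\ref{main} supplies the $C^2_{loc}$-convergent subsequence, and the argument above closes.
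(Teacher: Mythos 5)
Your overall architecture (blow up at a limit point, two infinite-length segments, scale invariance in dimension four, and the algebraic observation that $R=0$ together with $Q^-=0$ forces $Ric=0$ via \eqref{Q.def}) is exactly the paper's intended route, but the two steps you flag as ``the real work'' are handled incorrectly, and this is where the gap lies. First, compactness: you assume the $W^{2,q}$/$C^2$ convergence in Theorem \ref{main} is powered by the local $L^{n/2}$ bound on $Ric$, and therefore set out to ``manufacture a local $L^2$ bound on $Ric_{g_k}$'' by integrating the $Q$-curvature identity against cutoffs on the original metric. In fact Corollary \ref{convergence} and Lemma \ref{regularity} require only smallness of the \emph{scalar} curvature in $L^{n/2}$, which here is supplied directly by the hypothesis $\int_\Om|R_g|^2\,dV_g<\infty$ together with scale invariance and absolute continuity after blow-up. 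So no a priori Ricci bound is needed at all; your proposed derivation of one is both unnecessary and left unproven --- you yourself note that the $\Delta_g\varphi$-type boundary term ``still involves $u_k$, the very object being controlled,'' and the suggested bootstrap from completeness is never carried out, so as written this step fails.

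Second, the identification of the limit. The convergence available is only weak $W^{2,q}_{loc}$ with $q<\frac{n}{2}=2$ (not $C^2_{loc}$, which cannot be extracted from an $L^2$ bound on $R_{g_k}$), so your assertion that ``passing to a limit gives $R_{g_\infty}\equiv 0$ and $Q^-_{g_\infty}\equiv 0$'' is not justified: $Q_{g_k}$ involves $\Delta_{g_k}R_{g_k}$ and does not converge to $Q_{g_\infty}$ under this weak convergence. The paper's Lemma \ref{Q} circumvents this by arguing distributionally at the level of the sequence: test \eqref{Q.def} for $g_k$ against a nonnegative $\varphi$, move $\Delta_{g_k}$ onto $\varphi$ (so only $\|R_{g_k}\|_{L^2}\to 0$ and convergence of the coefficients $\sqrt{|g_k|}\,g_k^{ij}$ are used), bound $-\int Q_{g_k}\varphi\,dV_{g_k}$ by $\int Q^-_{g_k}\varphi\,dV_{g_k}\to 0$, conclude $\int|Ric_{g_k}|^2\varphi\,dV_{g_k}\to 0$, and only then invoke Lemma \ref{curvature.convergence} to get $Ric_{g_\infty}=0$; this feeds into Theorem \ref{uni-bubble} with hypothesis (2) replaced accordingly. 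Your cutoff identity is the germ of this computation, but you deploy it in the wrong place (before compactness, on the un-rescaled metric, to prove a bound that is not needed) instead of in the blown-up picture where the conformal factors are already under control and the argument closes.
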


In fact, by combining Theorem \ref{Q-curvature} with  \cite[Corollary 1.5]{LW}, we can show $Ric_g\in L^2(M)$ and $Q_g\in L^1(M)$ when $(M,g_0)$ is closed.

Our paper is organized as follows. In Section \ref{s: pre}, we briefly review some basic notations and results concerning conformal scalar curvature equations, then we classify the Ricci flat metrics
which are conformal to the Euclidean metric in subsection 2.4. Next we study properties of a metric sequence, which tends to
a Ricci flat metric. In the last section we give the proof of the main Theorem \ref{main} and Theorem \ref{Q-curvature}.

\section{Preliminaries}\label{s: pre}

\subsection{Notations}
First, we remark that, throughout the whole paper, we use $B_r(x)$ to denote the Euclidean ball of radius $r$, centered at $x$. When $x=0$, we simply write it as $B_r$. We also denote the Euclidean metric as $g_{euc}$.

Let $g_0$ be a metric defined on $B_1$ and
\[g=e^{\phi}g_0=\rho g_0\]
be a pointwise conformal metric, where $\phi: B_1\to \Real$ is a smooth function and $\rho=e^{\phi}$. Then the Ricci curvature tensor of $g$ is
\begin{equation}\label{Ric}
Ric_g=Ric_{g_0}-\frac{n-2}{2}\mbox{Hess}_{g_0}\phi+\frac{n-2}{4}\n_{g_0}\phi\otimes\n_{g_0}\phi-\frac{1}{2}(\De_{g_0} \phi+\frac{n-2}{2}|\n \phi|^2_{g_0})g_0,
\end{equation}
and the scalar curvature of $g$ has the below form
\begin{equation}\label{Scalar}
R_g=e^{-\phi}(R_{g_0}-(n-1)\De_{g_0} \phi-\frac{(n-1)(n-2)}{4}|\n \phi|^2_{g_0}).
\end{equation}

Taking $\rho=u^{\frac{4}{n-2}}$ for some smooth positive function $u$, $R(g)$ have the following precise formula
\begin{equation}\label{eq-conf}
-\frac{4(n-1)}{n-2}\De_{g_0}u+R_{g_0}u=R_gu^{\frac{n+2}{n-2}}.
\end{equation}

Moreover, it is easy to check that for any $\lambda>0$,
$$
|R_g|^\frac{n}{2}dV_g=|R_{\lambda g}|^\frac{n}{2}dV_{\lambda g},\s and\s
|Ric_g|^\frac{n}{2}dV_g=|Ric_{\lambda g}|^\frac{n}{2}dV_{\lambda g}.
$$

We refer to \cite{LP} for more details.
\medskip

\subsection{$\epsilon$-regularity}

In this subsection, we let $v$ be a weak solution of
\begin{equation}\label{equation.epsilon}
-\mbox{div}(a^{ij}v_{j})=fv,
\end{equation}
where
\begin{equation}\label{aij}
0<\lambda_1\leq a^{ij},\s \|a^{ij}\|_{C^0(B_2)}+\|\nabla a^{ij}\|_{C^0(B_2)}
<\lambda_2.
\end{equation}

We have the following preliminary lemmas.
\begin{lem}\label{Lalpha}
Suppose $v\in W^{1,2}(B_2)$ is a positive weak solution of \eqref{equation.epsilon} and \eqref{aij} holds. We assume
$$
\int_{B_2}|f|^\frac{n}{2}\leq \Lambda.
$$
Then there exists a positive constant $C$ depending only on $\lambda_1$, $\lambda_2$ and $\Lambda$ such that
$$
r^{2-n}\int_{B_r(x)}|\nabla\log v|^2<C,\s \forall B_r(x)\subset B_1.
$$
\end{lem}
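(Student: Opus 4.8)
The plan is to reduce Lemma \ref{Lalpha} to a standard $\epsilon$-regularity / Harnack-type estimate for the equation \eqref{equation.epsilon}. The key observation is that $\log v$ should satisfy a good differential inequality. Testing \eqref{equation.epsilon} with $\varphi^2/v$ for a cutoff $\varphi\in C_c^\infty(B_2)$, one obtains after integration by parts the Caccioppoli-type inequality
\begin{equation*}
\int a^{ij}\frac{v_i v_j}{v^2}\varphi^2 \leq C\int a^{ij}\frac{\varphi_i\varphi_j}{1}+\int|f|\varphi^2,
\end{equation*}
which, using ellipticity $a^{ij}\geq\lambda_1$ and boundedness from \eqref{aij}, gives
\begin{equation*}
\int_{B_2}|\nabla\log v|^2\varphi^2 \leq \frac{C(\lambda_1,\lambda_2)}{}\Big(\int|\nabla\varphi|^2+\int_{B_2}|f|\varphi^2\Big).
\end{equation*}
To control the last term uniformly I would use H\"older: on a ball $B_r(x)$, $\int_{B_r(x)}|f|\leq \(\int_{B_2}|f|^{n/2}\)^{2/n}|B_r|^{1-2/n}\leq \Lambda^{2/n}C r^{n-2}$, which is exactly the scaling needed to match the $r^{2-n}$ factor in the conclusion. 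So the crude global bound already gives $r^{2-n}\int_{B_r(x)}|\nabla\log v|^2\varphi^2\leq C$ once we pick $\varphi\equiv 1$ on $B_r(x)$ and supported in $B_{2r}(x)$, \emph{provided} $B_{2r}(x)\subset B_2$; the general case $B_r(x)\subset B_1$ follows since then $B_{2r}(x)\subset B_2$ automatically when $r\leq 1/2$, and for $r$ bounded below one can cover by finitely many such balls or rescale.

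Actually the subtlety is that the naive Caccioppoli estimate as written only controls $\int|\nabla\log v|^2$ on a \emph{fixed} ball, not with the sharp scaling in $r$, because the term $\int|\nabla\varphi|^2$ over $B_{2r}(x)\setminus B_r(x)$ scales like $r^{n-2}$ — which is in fact precisely what we want. So after dividing by $r^{n-2}$ everything is scale-invariant and bounded by $C(\lambda_1,\lambda_2,\Lambda)$. The one genuine point that needs care is that the Caccioppoli inequality requires knowing $1/v\in L^1_{loc}$ or at least that the test function $\varphi^2/v$ is admissible; since $v$ is a \emph{positive} $W^{1,2}$ weak solution, one first runs a De Giorgi–Nash–Moser iteration (using the $L^{n/2}$ bound on $f$ to get the required $W^{1,2}\hookrightarrow$ reverse-H\"older input via the Sobolev inequality) to conclude $v$ is locally bounded and locally bounded below by a positive constant on compact subsets, after which $\log v\in W^{1,2}_{loc}$ and the test function is legitimate.

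The main obstacle I expect is precisely this local boundedness of $v$ from above and below — i.e. a local Harnack inequality for \eqref{equation.epsilon} with merely $L^{n/2}$ potential. This is the borderline case of Moser iteration: the potential term $fv$ is critical with respect to the Sobolev embedding, so the iteration does not close for arbitrary $\Lambda$ but does close once one localizes to a small enough ball where $\int|f|^{n/2}$ is small, or alternatively one uses that the bound is on the full $\Lambda$ and absorbs using a covering/continuity argument. Once the Harnack inequality (or even just the two-sided local bound) is in hand, the gradient estimate is the routine Caccioppoli computation sketched above together with the H\"older estimate on $\int_{B_r}|f|$, and the scaling bookkeeping to get the factor $r^{2-n}$. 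I would structure the write-up as: (i) local two-sided bound on $v$ via Moser with small-energy localization; (ii) Caccioppoli for $\log v$; (iii) H\"older bound on $\int_{B_r(x)}|f|$ and the scaling argument to finish.
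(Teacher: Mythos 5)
Your central computation is the right one, and it is essentially the standard argument the paper itself points to (the paper gives no proof of Lemma \ref{Lalpha}, referring instead to \cite{DLX,X} and to \cite[Theorem 4.4]{HL}): test the equation with $\varphi^2/v$, absorb the mixed term by Cauchy--Schwarz and ellipticity to get the logarithmic Caccioppoli inequality, bound $\int_{B_{2r}(x)}|f|\leq \Lambda^{2/n}|B_{2r}|^{1-2/n}\leq C\Lambda^{2/n}r^{n-2}$ by H\"older, and note $\int|\nabla\varphi|^2\leq Cr^{n-2}$; dividing by $r^{n-2}$ gives the claim. (Also, no covering or rescaling is needed at the end: $B_r(x)\subset B_1$ already forces $B_{2r}(x)\subset B_2$, since $|x|+2r\leq(|x|+r)+r\leq 2$.)

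The genuine gap is your step (i). With $f$ only in $L^{n/2}$ --- the critical exponent --- local boundedness, and hence the Harnack inequality, is simply not available, and smallness of $\int|f|^{n/2}$ does not rescue it: in Moser's iteration the potential term must be absorbed with a constant of order $1/\beta$, which fails for large iteration exponents $\beta$ however small the fixed $L^{n/2}$ norm is (this is the Brezis--Kato phenomenon: one gets $v\in L^p_{loc}$ for finite $p$, not $L^\infty_{loc}$). A concrete counterexample to your claimed two-sided bound is $v(x)=\log(2/|x|)$ on $B_1$ with $a^{ij}=\delta^{ij}$ and $f=(n-2)\,|x|^{-2}\log(2/|x|)^{-1}$: here $v$ is a positive $W^{1,2}$ weak solution (the origin is removable, having zero capacity, and $fv=(n-2)|x|^{-2}\in L^1$), $f\in L^{n/2}(B_1)$ with $\int_{B_\rho}|f|^{n/2}\to0$ as $\rho\to0$, yet $v$ is unbounded. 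Fortunately the detour through Harnack is unnecessary: to make the test function admissible, test instead with $\varphi^2/(v+\epsilon)$, which lies in $W^{1,2}_0\cap L^\infty$ for every $\epsilon>0$ because $v\in W^{1,2}$ and $v\geq 0$. Since $v/(v+\epsilon)\leq1$ and $\epsilon/(v+\epsilon)\leq1$, the right-hand side contributes at most $\int|f|\varphi^2$, so the Caccioppoli bound for $\nabla\log(v+\epsilon)$ holds uniformly in $\epsilon$, and letting $\epsilon\to0$ via Fatou (using $v>0$ a.e.) yields $\nabla\log v\in L^2_{loc}$ together with the stated estimate --- note this integrability is part of what the lemma must produce, since unlike Lemma \ref{regularity} it is not assumed. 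With this replacement your steps (ii)--(iii) are complete as written; step (i) should simply be deleted.
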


\begin{lem}\label{regularity}
Let $v\in W^{1,2}(B_2)$ be a positive solution of \eqref{equation.epsilon}. Assume  \eqref{aij} holds, and $\log v\in W^{1,2}(B_2)$.
Then for any $q\in (0,\frac{n}{2})$, there exists $\epsilon_0
=\epsilon_0(q,\lambda_1,\lambda_2)>0$, such that
if
$$
\int_{B_2}|f|^\frac{n}{2}<\epsilon_0,
$$
then
$$
\|\nabla\log v\|_{W^{1,q}(B_{\frac{1}{2}})}\leq C(\lambda_1,\lambda_2,\epsilon_0),
$$
and
$$
e^{-\frac{1}{|B_\frac{1}{2}|}\int_{B_\frac{1}{2}}\log v}\|v\|_{W^{2,q}(B_\frac{1}{2})}+e^{\frac{1}{|B_\frac{1}{2}|}\int_{B_\frac{1}{2}}\log v}\|v^{-1}\|_{W^{2,q}(B_\frac{1}{2})}
\leq C(\lambda_1,\lambda_2,\epsilon_0).
$$
\end{lem}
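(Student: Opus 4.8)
The plan is to use the Morrey bound of Lemma~\ref{Lalpha} to place $\log v$ in $BMO$, then run a Moser iteration in which the borderline potential term is absorbed using the smallness of $\|f\|_{L^{n/2}}$, so that $v$ and $v^{-1}$ become bounded above and below; the $W^{2,q}$ and $W^{1,q}$ estimates then follow from standard elliptic regularity together with one bootstrap round. Throughout, write $w:=\log v$, which by hypothesis lies in $W^{1,2}(B_2)$; a direct computation from $v_j=vw_j$ shows that $w$ is a weak solution of $-\operatorname{div}(a^{ij}w_j)=f+a^{ij}w_iw_j$, while $v^{-1}$ satisfies $-\operatorname{div}(a^{ij}(v^{-1})_j)=-fv^{-1}-2v^{-1}a^{ij}w_iw_j$. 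Since $a^{ij}w_iw_j\ge0$ by ellipticity, both $v$ and $v^{-1}$ are positive subsolutions of $-\operatorname{div}(a^{ij}U_j)\le|f|U$ on $B_2$.

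First I would reduce to the normalized case. As \eqref{equation.epsilon} is invariant under $v\mapsto cv$, it suffices to assume that the average of $\log v$ over $B_1$ vanishes and to prove $\|v\|_{W^{2,q}(B_{1/2})}+\|v^{-1}\|_{W^{2,q}(B_{1/2})}\le C$ and $\|\nabla\log v\|_{W^{1,q}(B_{1/2})}\le C$; the normalization in the statement, which uses the average over $B_{1/2}$, differs from this one by a factor bounded in terms of the $BMO$ norm of $\log v$, so the two formulations are equivalent once that norm is controlled.

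Next, from Lemma~\ref{Lalpha} and the Poincar\'e inequality, for every ball $B_r(x)\subset B_1$ the mean oscillation of $w$ over $B_r(x)$ is at most $Cr\big(|B_r(x)|^{-1}\int_{B_r(x)}|\nabla w|^2\big)^{1/2}\le C(\lambda_1,\lambda_2,\epsilon_0)$, so $w\in BMO$ on, say, $B_{7/8}$ with controlled norm. By John--Nirenberg and the normalization there is a fixed $p_0=p_0(\lambda_1,\lambda_2,\epsilon_0)>0$ with $\|v\|_{L^{p_0}(B_{3/4})}+\|v^{-1}\|_{L^{p_0}(B_{3/4})}\le C$. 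I then apply Moser's local boundedness estimate to the subsolutions $v$ and $v^{-1}$: testing with $U^{2\beta+1}\eta^2$, the term $\int|f|(U^{\beta+1}\eta)^2$ produced at each step is bounded, by H\"older and the Sobolev inequality, by $\|f\|_{L^{n/2}}$ times the Dirichlet energy of $U^{\beta+1}\eta$, so choosing $\epsilon_0$ small enough (depending only on $n$ and $\lambda_1$) lets one absorb it, and the standard iteration then gives $\sup_{B_{5/8}}U\le C\|U\|_{L^{p_0}(B_{3/4})}$. Applied to $U=v$ and $U=v^{-1}$ this yields $\|\log v\|_{L^\infty(B_{5/8})}\le C(\lambda_1,\lambda_2,\epsilon_0)$. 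This is the crux of the argument, and it is here that the criticality of the hypothesis $f\in L^{n/2}$ forces the quantitative smallness $\int_{B_2}|f|^{n/2}<\epsilon_0$ to be used: without it neither the Moser absorption nor the passage from $BMO$ to $L^\infty_{loc}$ is available.

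Finally I would bootstrap. With $v,v^{-1}\in L^\infty(B_{5/8})$ we have $fv\in L^{n/2}(B_{5/8})$, so the Calder\'on--Zygmund theory for divergence-form operators with Lipschitz coefficients (after a preliminary Meyers-type improvement of $\nabla v$ when $q>2$) gives $v\in W^{2,q}(B_{9/16})$ for every $q<n/2$; consequently $\nabla\log v=v^{-1}\nabla v\in L^p(B_{9/16})$ for all $p<n$ and $|\nabla\log v|^2\in L^q(B_{9/16})$ for all $q<n/2$. Since $\nabla(\nabla\log v)=-(\nabla\log v)\otimes(\nabla\log v)+v^{-1}\nabla^2 v$, this already gives $\nabla\log v\in W^{1,q}(B_{1/2})$ with the claimed bound. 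Inserting $|\nabla\log v|^2\in L^q$ into $-\operatorname{div}(a^{ij}(v^{-1})_j)=-fv^{-1}-2v^{-1}a^{ij}w_iw_j\in L^q(B_{9/16})$ and applying Calder\'on--Zygmund once more yields $v^{-1}\in W^{2,q}(B_{1/2})$. Undoing the normalization completes the proof.
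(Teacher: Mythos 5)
The paper itself does not prove Lemma \ref{regularity} (it refers to \cite{DLX,X} and to the proof of \cite[Theorem 4.4]{HL}), so your argument can only be judged on its own terms, and it has a genuine gap at its central step: the Moser iteration cannot reach $L^\infty$ under a fixed smallness of the \emph{critical} norm $\|f\|_{L^{n/2}}$. Testing with $U^{2\beta+1}\eta^2$ produces the good term with the factor $\frac{2\beta+1}{(\beta+1)^2}\lambda_1\sim \lambda_1/\beta$, while the bad term is bounded by $C_S\|f\|_{L^{n/2}}\|\nabla(U^{\beta+1}\eta)\|_{L^2}^2$ plus lower order; absorption at step $\beta$ therefore requires $\epsilon_0\lesssim \lambda_1/\beta$, so a fixed $\epsilon_0$ permits only finitely many steps, not the full iteration. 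The conclusion you draw from it is in fact false: take $a^{ij}=\delta^{ij}$, $v(x)=\log\frac{4}{r|x|}$ and $f(x)=\frac{n-2}{|x|^2\log(4/(r|x|))}$ on $B_2$ with $r$ small. Then $v\in W^{1,2}(B_2)$ is a positive weak solution of \eqref{equation.epsilon}, $\log v\in W^{1,2}(B_2)$, and $\int_{B_2}|f|^{n/2}\leq C_n\big(\log\tfrac{2}{r}\big)^{1-\frac{n}{2}}$ is smaller than any prescribed $\epsilon_0$ for $r$ small, yet $v$ is unbounded near the origin, and so is $e^{-\overline{\log v}}v$ after your normalization (it still blows up like a multiple of $\log\log\frac{1}{|x|}$ relative to its mean). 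Hence the claimed bound $\|\log v\|_{L^\infty(B_{5/8})}\leq C(\lambda_1,\lambda_2,\epsilon_0)$ cannot hold, and the final bootstrap, which is predicated on $v,v^{-1}\in L^\infty(B_{5/8})$ (e.g.\ ``$fv\in L^{n/2}$'' and ``$\nabla\log v=v^{-1}\nabla v\in L^p$ for all $p<n$''), does not go through as written. Note also that this is precisely why $\epsilon_0$ in the statement must depend on $q$, a dependence your argument would never produce, since you claim $\epsilon_0$ depends only on $n$ and $\lambda_1$; that discrepancy is itself a warning sign.

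The salvageable structure is a Brezis--Kato-type \emph{finite} improvement rather than an iteration to $L^\infty$. Your preliminary reductions are fine: the subsolution identities for $v$ and $v^{-1}$, the reduction by scaling invariance $v\mapsto cv$, and the use of Lemma \ref{Lalpha} with Poincar\'e to bound the $BMO$ norm of $\log v$, whence John--Nirenberg gives $e^{\pm(\log v-\overline{\log v})}\in L^{p_0}$ with $p_0=p_0(\lambda_1,\lambda_2)$. From there one should perform only finitely many reverse-H\"older/Moser steps on the normalized $e^{-\overline{\log v}}v$ and its inverse, the number of steps being dictated by the target exponent $q<\frac{n}{2}$, and each step is admissible provided $\epsilon_0$ is small depending on that (finite) number, i.e.\ on $q$; this yields $L^{p}$ bounds with $p=p(q)$ large but finite. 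Then a single Calder\'on--Zygmund application to $-\mathrm{div}(a^{ij}v_j)=fv\in L^{s}$, $\frac1s=\frac2n+\frac1p$, gives the normalized $W^{2,q}$ bounds, and your final algebraic step (writing $\nabla\log v=v^{-1}\nabla v$ and $\nabla^2\log v=v^{-1}\nabla^2v-\nabla\log v\otimes\nabla\log v$, with the products controlled by H\"older using the $L^p$ bounds on $v^{-1}$ rather than $L^\infty$) then gives the $W^{1,q}$ bound on $\nabla\log v$. As written, however, the proof contains a false intermediate claim and is not correct.
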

For the proofs of the above lemmas, one can refer to
\cite{DLX,X} (or see the proof of \cite[Theorem 4.4]{HL}).

As a corollary, we have the following:
\begin{cor}\label{convergence}
Let $\Omega$ be a connected open set in $\R^n$ and
$g$ be a smooth metric defined on $\Omega$ with
$$
0<\lambda<(g_{ij})<\lambda',\s \|g_{ij}\|_{C^1}<\Lambda.
$$
Let $\hat{g}_k=u_k^\frac{4}{n-2}g_k$, where $g_k$ converges to $g$ in $C^2(\Omega)$. Then there exists $\epsilon_0'>0$ which only depends on $\lambda$, $\lambda'$ and $\Lambda$, such that if $\|R(\hat{g}_k)\|_{L^\frac{n}{2}}<\epsilon_0'$, then there exists $c_k$, such that $c_ku_k$, $\frac{1}{c_ku_k}$ and $\log c_ku_k$ converge weakly in $W^{2,q}_{loc}(\Omega)$
for any $q<\frac{n}{2}$.
\end{cor}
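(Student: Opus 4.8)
This is a localization of Lemma~\ref{regularity}. The plan is: rewrite the conformal scalar curvature equation \eqref{eq-conf} for $u_k$ in the divergence form \eqref{equation.epsilon}; pass to small Euclidean balls and rescale so that the smallness hypothesis of Lemma~\ref{regularity} is satisfied; apply Lemmas~\ref{Lalpha} and~\ref{regularity} to get uniform local $W^{2,q}$ bounds for $u_k$ up to a multiplicative normalization; finally glue the ball-by-ball normalizations into a single constant $c_k$ using connectedness of $\Om$, and extract a weakly convergent subsequence. Fix $q\in(0,\tfrac n2)$ and work in a coordinate chart: multiplying \eqref{eq-conf} (with $g_k$ in place of $g_0$) by $\sqrt{\det g_k}$ shows $u_k>0$ solves $-\,\p_i\big(a_k^{ij}\p_j u_k\big)=f_k u_k$ with $a_k^{ij}=\tfrac{4(n-1)}{n-2}\sqrt{\det g_k}\,g_k^{ij}$ and $f_k=\tfrac{n-2}{4(n-1)}\sqrt{\det g_k}\big(R(\hat g_k)u_k^{4/(n-2)}-R(g_k)\big)$. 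Since $g_k\to g$ in $C^2(\Om)$, for $k$ large $a_k^{ij}$ obeys \eqref{aij} on any fixed ball with $\lambda_1,\lambda_2$ controlled by $n,\lambda,\lambda',\Lambda$; and using $dV_{\hat g_k}=u_k^{2n/(n-2)}\sqrt{\det g_k}\,dx$ one gets, for every ball $B$ in the chart,
$$
\int_{B}|f_k|^{\frac n2}\,dx\ \le\ C(n,\lambda')\Big(\int_{B}|R(\hat g_k)|^{\frac n2}\,dV_{\hat g_k}\ +\ \int_{B}|R(g_k)|^{\frac n2}\,dx\Big).
$$

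Next I would localize. Given $p\in\Om$, choose $r_p>0$ with $\overline{B_{2r_p}(p)}\subset\Om$, rescale $B_{2r_p}(p)$ onto $B_2$ via $\tilde u_k(x)=u_k(p+r_px)$, $(\tilde g_k)_{ij}(x)=(g_k)_{ij}(p+r_px)$, and note that the rescaled metric keeps the same $C^0$-bounds and has $C^1$-norm $\le r_p\Lambda\le\Lambda$, so \eqref{aij} still holds with uniform constants. Both $\int|R|^{n/2}dx$ and $\int|R(\cdot)|^{n/2}dV$ are scale invariant in the appropriate sense, so the rescaled right-hand side above on $B_2$ is bounded by $C(n,\lambda')\big(\|R(\hat g_k)\|_{L^{n/2}}^{n/2}+\int_{B_{2r_p}(p)}|R(g_k)|^{n/2}dx\big)$. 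Since $R(g_k)\to R(g)$ in $C^0$ near $p$, the last term is $\le C(n,\lambda')\,\om_n(2r_p)^n\sup_{B_{2r_p}(p)}|R(g)|^{n/2}+o(1)\to 0$ as $r_p\to0$ (uniformly in large $k$). Hence, first fixing $\epsilon_0'$ small (depending only on $n,\lambda,\lambda',\Lambda$; the threshold $\epsilon_0$ of Lemma~\ref{regularity} may be taken independent of $q$) and then $r_p$ small, the rescaled $\int_{B_2}|\tilde f_k|^{n/2}<\epsilon_0$. Lemma~\ref{Lalpha} (the mere bound $\int_{B_2}|\tilde f_k|^{n/2}\le\Lambda$ suffices) then gives $\log\tilde u_k\in W^{1,2}_{loc}(B_2)$, and Lemma~\ref{regularity} yields, uniformly in $k$, a bound on
$$
\|\n\log\tilde u_k\|_{W^{1,q}(B_{1/2})}+e^{-\frac1{|B_{1/2}|}\int_{B_{1/2}}\log\tilde u_k}\|\tilde u_k\|_{W^{2,q}(B_{1/2})}+e^{\frac1{|B_{1/2}|}\int_{B_{1/2}}\log\tilde u_k}\|\tilde u_k^{-1}\|_{W^{2,q}(B_{1/2})}.
$$
Undoing the scaling: for every $p\in\Om$ there are $\rho_p>0$, $C_p<\infty$ with $\|\n\log u_k\|_{W^{1,q}(B_{\rho_p}(p))}\le C_p$ and $e^{\mp\frac1{|B_{\rho_p}(p)|}\int_{B_{\rho_p}(p)}\log u_k}\|u_k^{\pm1}\|_{W^{2,q}(B_{\rho_p}(p))}\le C_p$.

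To finish, I would fix a base ball $B_\ast=B_{\rho_{p_\ast}}(p_\ast)$ and set $c_k:=\exp\big(-\tfrac1{|B_\ast|}\int_{B_\ast}\log u_k\big)$. For any $p\in\Om$, connectedness of $\Om$ lets us join $B_\ast$ to $B_{\rho_p}(p)$ by a finite chain of balls $B_{\rho_{p_i}}(p_i)$ with nonempty consecutive overlaps; on the connected union $V$ of the chain, the Poincar\'e inequality together with $\sum_i\|\n\log u_k\|_{L^q(B_{\rho_{p_i}}(p_i))}\le C$ bounds $\|\log(c_ku_k)-\tfrac1{|V|}\int_V\log(c_ku_k)\|_{L^q(V)}$, and since $\tfrac1{|B_\ast|}\int_{B_\ast}\log(c_ku_k)=0$ the mean is bounded too; hence $\|\log(c_ku_k)\|_{L^q(V)}\le C(p)$ uniformly in $k$, so in particular $\big|\tfrac1{|B_{\rho_p}(p)|}\int_{B_{\rho_p}(p)}\log(c_ku_k)\big|\le C(p)$. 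Combining with the local estimates of the previous paragraph, $c_ku_k$, $(c_ku_k)^{-1}$ and $\log(c_ku_k)$ are bounded in $W^{2,q}(B_{\rho_p}(p))$ uniformly in $k$, for every $p$, hence bounded in $W^{2,q}_{loc}(\Om)$. By reflexivity of $W^{2,q}$ and compactness of $W^{2,q}\hookrightarrow L^q$ on bounded sets, exhausting $\Om$ by compacta and diagonalizing produces a subsequence along which all three converge weakly in $W^{2,q}_{loc}(\Om)$, the limits being a positive function, its reciprocal, and its logarithm; as the threshold $\epsilon_0'$ is $q$-independent this holds for every $q<\tfrac n2$.

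I expect the main difficulties to be two organizational points rather than any deep estimate. First, securing the smallness of $\int|f_k|^{n/2}$ demanded by Lemma~\ref{regularity}: $f_k$ contains the background scalar curvature $R(g_k)$, which is only \emph{bounded}, not small, so one genuinely must pass to small balls and exploit the scale invariance of $\int|R|^{n/2}dx$; keeping track that this can be done with a threshold $\epsilon_0'$ and ellipticity constants depending only on $\lambda,\lambda',\Lambda$ (the radius $r_p$ being allowed to depend on $p$) is the delicate bookkeeping. Second, manufacturing a \emph{single} normalizing constant $c_k$ valid on all of $\Om$ out of the a priori ball-by-ball normalizations in Lemma~\ref{regularity}: this is where connectedness of $\Om$ enters essentially, via the chain-of-balls Poincar\'e argument that upgrades the uniform $W^{1,q}$ control of $\n\log u_k$ to uniform control of $\log(c_ku_k)$ on compact subsets.
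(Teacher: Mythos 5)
Your proposal is correct and follows essentially the same route as the paper's proof: rewrite \eqref{eq-conf} over the background $g_k$ in the divergence form \eqref{equation.epsilon}, obtain smallness of $\int|f_k|^{\frac n2}$ on sufficiently small balls (the bounded background curvature term being killed by scale invariance and small radius), apply Lemmas \ref{Lalpha} and \ref{regularity}, and use the Poincar\'e inequality on connected compact subsets to replace the ball-by-ball normalizations by a single constant $c_k$ before a covering/diagonal extraction. You simply spell out the steps (the handling of $R(g_k)$ and the chaining of balls) that the paper compresses into ``a covering argument.''
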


\proof
By a covering argument,
\[\|\nabla \log u_k\|_{L^2(\Omega_1)}<C(\Omega_1)\]
for any $\Omega_1\subset\subset\Omega$. Let $B_\delta(p)\subset\subset\Omega$ and set $c_k$ to be a constant, such that
$$
\int_{B_\delta(p)}\log c_ku_k=0.
$$
By the Poincar\'e inequality (see \cite[Theorem 5.4]{ABM}), $\int_{\Omega_1}|\log c_ku_k|<C(\Omega_1)$.

On the other hand, the formula \eqref{eq-conf} gives
$$
-\Delta_{g_k}u_k=c(n)(-R_{g_k}u_k+R_{\hat{g}_k}u_k^\frac{n+2}{n-2})=:f_ku_k,
$$
and the assumptions of $g_k$ and $R(\hat{g}_k)$ implies there exist positive numbers $K_0$ and $r_0$ such that we have a bound
$$
\int_{B_{2r_0}(x)}|f_k|^\frac{n}{2}\leq C\epsilon_0'< \ep_0
$$
for any $x\in \Om_1$ and $k\geq K_0$.

Then applying Lemma \ref{regularity} on $B_{r_0}(x)$ and using a covering argument, we get the desired result.
\endproof

\medskip

\subsection{Weak convergence of curvatures}
The aim of this subsection is to show that when both $\{g_k\}$ and $g_k^{-1}$ converge
in $\cap_{p\in[1,n)}W^{1,p}$, $Ric_{g_k}$ converges in the sense of distributions. 
For convenience, we let $\mathcal{D}(B_1, \Real^{n})$  denote the space of smooth $n$-dimensional vector-valued functions which are compactly supported in $B_1$, and set $\mathcal{D}(B_1)=\mathcal{D}(B_1, \Real)$.

\begin{lem}\label{curvature.convergence}
Let
$$
g_k=g_{k,ij}dx^i\otimes dx^j,\s g=g_{ij}dx^i\otimes dx^j
$$
be smooth metrics defined on $B_1$. We assume
$g_{k,ij}$ and $g_{k}^{ij}$ converge to
$g_{ij}$ and $g^{ij}$ in $W^{1,p}(B_1)$ for any
$p<n$.
Then for any $n\geq 3$ and $X$, $Y\in \mathcal{D}(B_1,\R^n)$, we have
\begin{equation}\label{conv-curvature}
\lim_{k\rightarrow+\infty}\int_{B_1}Ric_{g_k}(X,Y)dV_{g_k}=
\int_{B_1}Ric_{g}(X,Y)dV_{g}.	
\end{equation}
Moreover, if $\|Ric_{g_k}\|_{L^\frac{n}{2}(B_1,g_k)}\rightarrow 0$, then $Ric_g=0$.
\end{lem}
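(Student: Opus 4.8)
The plan is to pass everything through the coordinate expression
\[
(Ric_g)_{ij}=\p_m\Gamma^m_{ij}-\p_i\Gamma^m_{jm}+\Gamma^m_{mn}\Gamma^n_{ij}-\Gamma^m_{in}\Gamma^n_{jm},
\qquad
\Gamma^m_{ij}=\tfrac12 g^{ml}\bigl(\p_i g_{jl}+\p_j g_{il}-\p_l g_{ij}\bigr),
\]
so that $Ric_{g_k}$ becomes (divergence of Christoffels) $+$ (quadratic in Christoffels), and then reduce the statement to convergence of products of functions we already control. First I would record what the hypothesis gives. By the Sobolev embedding $W^{1,p}(B_1)\hookrightarrow L^{np/(n-p)}(B_1)$ and the boundedness of $B_1$, letting $p\uparrow n$ yields $g_{k,ij}\to g_{ij}$ and $g_k^{ij}\to g^{ij}$ in $L^q(B_1)$ for every finite $q$, while $\p g_{k,ij}\to\p g_{ij}$ in $L^p(B_1)$ for every $p<n$. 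Multiplying these (H\"older, plus the elementary inequality $(\sqrt a-\sqrt b)^2\le|a-b|$ for the volume element) gives: $\sqrt{\det g_k}\to\sqrt{\det g}$ in $L^q(B_1)$ for all finite $q$; the Christoffel-type factors $\Gamma^m_{k,ij}$ and $g_k^{ab}\p_m g_{k,ab}$ converge in $L^p(B_1)$ for all $p<n$ (and are bounded there); and a product of two such factors converges in $L^s(B_1)$ for all $s<n/2$. This last point is exactly where $n\ge3$ enters, since then $n/2>1$ and such quadratic products converge in some $L^s$ with $s>1$.

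Next I would write $\int_{B_1}Ric_{g_k}(X,Y)\,dV_{g_k}=\int_{B_1}(Ric_{g_k})_{ij}X^iY^j\sqrt{\det g_k}\,dx$, insert the formula above, and integrate by parts in the two terms carrying a derivative of a Christoffel symbol. This is legitimate because $g_k$, hence every factor, is smooth and $X,Y$ have compact support; using $\p_m\sqrt{\det g_k}=\tfrac12\sqrt{\det g_k}\,g_k^{ab}\p_m g_{k,ab}$, the outcome is a finite sum $\sum_\alpha\int_{B_1}F_k^\alpha\,dx$ where each $F_k^\alpha$ is a product of: a fixed smooth compactly supported function built from $X,Y$ and their first derivatives; the factor $\sqrt{\det g_k}$; and either one Christoffel-type factor or a product of two of them. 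By the previous paragraph each of these factors converges strongly in a Lebesgue space, with H\"older-conjugate exponents (the exponent $s>1$ for the quadratic terms being precisely what makes the pairing with $\sqrt{\det g_k}$ work), and the sequences are bounded there, so $F_k^\alpha\to F^\alpha$ in $L^1(B_1)$, with $F^\alpha$ the analogous expression for $g$. Reading the same integration by parts backwards for the smooth metric $g$ identifies $\sum_\alpha\int_{B_1}F^\alpha\,dx$ with $\int_{B_1}Ric_g(X,Y)\,dV_g$, which proves \eqref{conv-curvature}.

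For the final assertion, I would use the pointwise Cauchy--Schwarz inequality $|Ric_{g_k}(X,Y)|\le|Ric_{g_k}|_{g_k}\,|X|_{g_k}\,|Y|_{g_k}$ together with H\"older with exponents $\tfrac n2$ and $\tfrac{n}{n-2}$ with respect to $dV_{g_k}$:
\[
\Bigl|\int_{B_1}Ric_{g_k}(X,Y)\,dV_{g_k}\Bigr|\le\|Ric_{g_k}\|_{L^{n/2}(B_1,g_k)}\Bigl(\int_{B_1}\bigl(|X|_{g_k}|Y|_{g_k}\bigr)^{\frac{n}{n-2}}dV_{g_k}\Bigr)^{\frac{n-2}{n}}.
\]
On a compact set containing the supports of $X,Y$, the last integral is dominated, up to a constant depending only on $n$, $\|X\|_{C^0}$ and $\|Y\|_{C^0}$, by $\int\bigl(\sum_{ij}|g_{k,ij}|\bigr)^{\frac{n}{n-2}+\frac n2}dx$, which is bounded uniformly in $k$ by the $L^q$-bounds of the first paragraph. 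Letting $k\to\infty$, the left side tends to $\int_{B_1}Ric_g(X,Y)\,dV_g$ by \eqref{conv-curvature} and to $0$ because $\|Ric_{g_k}\|_{L^{n/2}(B_1,g_k)}\to0$; hence $\int_{B_1}Ric_g(X,Y)\,dV_g=0$ for all $X,Y\in\mathcal D(B_1,\R^n)$. Taking $X=\varphi\,\p_i$ and $Y=\varphi\,\p_j$ with arbitrary $\varphi\in\mathcal D(B_1)$ forces $(Ric_g)_{ij}\sqrt{\det g}\equiv0$, and since $g$ is a smooth metric, $Ric_g\equiv0$.

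The step I expect to be the main obstacle is the integration-by-parts reorganization: one must be sure that after moving all derivatives onto the smooth test data, every surviving term is an integrable product of factors that each converge strongly in suitable Lebesgue spaces — in particular that no uncontrolled second derivative of $g_k$ remains, and that the quadratic Christoffel terms land in $L^s$ with $s>1$, which fails for $n=2$ and is the structural reason the hypothesis requires $n\ge3$.
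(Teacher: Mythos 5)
Your argument is correct, and the second half (the pointwise Cauchy--Schwarz plus H\"older with exponents $\tfrac n2,\tfrac n{n-2}$, with the conformal factor term bounded via the $L^q$-bounds on $g_{k,ij}$) is essentially identical to the paper's treatment of the final assertion. For the convergence statement \eqref{conv-curvature}, however, you take a genuinely different route: you work with the coordinate formula $(Ric)_{ij}=\p_m\Gamma^m_{ij}-\p_i\Gamma^m_{jm}+\Gamma\ast\Gamma$ and integrate by parts directly, moving the derivatives in $\p\Gamma$ onto the test data and onto $\sqrt{\det g_k}$ (via $\p_m\sqrt{\det g_k}=\tfrac12\sqrt{\det g_k}\,g_k^{ab}\p_m g_{k,ab}$), so that every surviving term is at worst quadratic in $\p g_k$ with coefficients converging in all finite $L^q$. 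The paper achieves the same reduction by a coordinate-free device: it invokes the Bochner--Weitzenb\"ock formula for $1$-forms and writes $\int Ric_{g_k}(X,Y)\,dV_{g_k}$ as $-\int\langle\nabla_{g_k}X,\nabla_{g_k}Y\rangle+\int\langle dX^{b},dY^{b}\rangle+\int\langle d^{*}X^{b},d^{*}Y^{b}\rangle$, whose integrands are again bounded by $(1+\sum|\p g_{k}|)^2$ times a polynomial in $X,Y,\nabla X,\nabla Y,g_k,g_k^{-1}$. The analytic core is the same in both: only first derivatives of the metric appear, quadratically at worst, and $W^{1,p}$-convergence for all $p<n$ puts the quadratic terms in $L^s$ with some $s\in(1,\tfrac n2)$ (this is exactly where $n\ge3$ enters, as you note). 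Your version is more elementary and self-contained but requires the term-by-term bookkeeping you flag; the paper's version packages the weak form of the Ricci pairing in one identity and avoids tracking individual Christoffel terms. Your concluding polarization ($X=\varphi\p_i$, $Y=\varphi\p_j$, then testing on small balls where $(Ric_g)_{ij}$ would have a sign) is a fine way to pass from the vanishing of the integrals to $Ric_g\equiv0$.
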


\proof
We  denote 
$X^{b,g_k}$ and $Y^{b,g_k}$ to be the  dual
of $X$ and $Y$ with respect to metric $g_k$ respectively. The Bochner formula for 1-forms (cf. \cite{P}) gives
$$
\<\Delta_{g_k}X^{b,g_k},Y^{b,g_k}\>_{g_k}=
\<\n^*_{g_k}\n_{g_k}X^{b,g_k},Y^{b,g_k}\>_{g_k}+Ric_{g_k}(X,Y),
$$
where $\De_{g_k}=dd^{*,g_k}+d^{*,g_k}d$, $d^{*,g_k}$  and $\n^*_{g_k}$ are the dual operators of  $d$ and $\n_{g_k}$ with respect to $g_k$ respectively.

Then it follows that
\begin{equation}\label{Bochner-for}
\begin{aligned}
\int_{B_1} Ric_{g_k}(X,Y)dV_{g_k}=&-\int_{B_1} \<\n_{g_k} X, \n_{g_k}Y\>_{g_k}dV_{g_k}\\
&+\int_{B_1} \<dX^{b,g_k}, dY^{b,g_k}\>_{g_k}dV_{g_k}\\
	&+\int_{B_1} \<d^{*,g_k}X^{b,g_k}, d^{*,g_k}Y^{b,g_k}\>_{g_k}dV_{g_k}.
\end{aligned}
\end{equation}

Set $X=X^i\frac{\p}{\p x^i}$ and recall that
$$
\nabla_{g_k}X=(\frac{\partial X^i}{\partial x^j}+\Gamma^i_{jm}(g_k)X^m)dx^j\otimes\frac{\partial}{\partial x^i},
$$
$$
dX^{b,g_k}=\frac{\partial (X^mg_{k, mj})}{\partial x^i}dx^i\wedge dx^j.
$$
$$
d^{*,g_k}X^{b,g_k}=-g^{ij}_k(\frac{\partial (X^mg_{k, mj})}{\partial x^i}-X^mg_{k, ms}\Gamma^s_{ij}(g_k))
$$
and
$$
\Gamma_{ij}^m(g_k)=\frac{1}{2}g_k^{ms}(\frac{\partial g_{k,is}}{\partial x^j}+\frac{\partial g_{k,js}}{\partial x^i}-\frac{\partial g_{k, ij}}{\partial x^s}).
$$
Then we can find a multivariate polynomial $P$, such that
the integrands in the right hand side of \eqref{Bochner-for} are bounded by
$$
\left(1+\sum_{i,j,m}\left|\frac{\partial g_{k,ij}}{\partial x^m}\right|\right)^2\times P(X,Y,\nabla X,\nabla Y,g_k,g_k^{-1}).
$$

Since $g_k$ and $g_k^{-1}$ converge in $W^{1,p}$ for any $p<n$,  it is not difficult to check that 
\begin{align*}
&-\int_{B_1} \<\n_{g_k} X, \n_{g_k}Y\>_{g_k}dV_{g_k}+\int_{B_1} \<dX^{b,g_k}, dY^{b,g_k}\>_{g_k}dV_{g_k}\\
&+\int_{B_1} \<d^{*,g_k}X^{b,g_k}, d^{*,g_k}Y^{b,g_k}\>_{g_k}dV_{g_k}
\\
\to &-\int_{B_1} \<\n_{g} X, \n_{g}Y\>_{g}dV_{g}+\int_{B_1} \<dX^{b,g}, dY^{b,g}\>_{g}dV_{g}\\
&+\int_{B_1} \<d^{*,g}X^{b,g}, d^{*,g}Y^{b,g}\>dV_{g}.
\end{align*}
Hence we get 
\[\lim_{k\rightarrow+\infty}\int_{B_1}Ric_{g_k}(X,Y)dV_{g_k}=
\int_{B_1}Ric_{g}(X,Y)dV_{g}.\]

Furthermore, when
$\|Ric_{g_k}\|_{L^\frac{n}{2}(B_1,g_k)}\rightarrow 0$, we have
\begin{align*}
\left|\int_{B_1}Ric_{g}(X,Y)dV_{g}\right|\leq& \varliminf_{k\to \infty}\norm{Ric_{g_k}}_{L^\frac{n}{2}(B_1,g_{k})}\\
&\times\left(\int_{B_1}\left((g_{k,ij}X^iX^j)(g_{k,ij}Y^iY^j)\right)^\frac{n}{2(n-2)}\sqrt{|g_k|}dx\right)^{1-\frac{2}{n}}\\
=&0.
\end{align*}

Therefore, $Ric_g(X,Y)=0$ for any $X$ and $Y\in\mathcal{D}(B_1,\R^n)$.
So, the proof is completed.\\
\endproof

\begin{lem}\label{Q} 
Let $g_k$, $g$
be as in Lemma \ref{curvature.convergence}.
Additionally, assume $n=4$ and
$$
\int_{B_1}|R_{g_k}|^2dV_{g_k}+\int_{B_1}Q_{g_k}^{-}dV_{g_k}\rightarrow 0.
$$
Then $Ric_{g}=0$.
\end{lem}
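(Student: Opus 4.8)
The plan is to reduce Lemma \ref{Q} to Lemma \ref{curvature.convergence} by showing that, under the stated hypotheses, the limiting metric $g$ has vanishing scalar curvature, and that this together with the structure of the $Q$-curvature in dimension $4$ forces $Ric_g = 0$. First I would establish that $R_g \equiv 0$ in the sense of distributions. Since $R_{g_k}$ appears linearly in the conformal scalar curvature equation \eqref{eq-conf} (written intrinsically, $R_{g_k}$ pairs against a test function through the divergence-form operator associated to $g_k$), and since $\int_{B_1}|R_{g_k}|^2 dV_{g_k} \to 0$ controls $\|R_{g_k}\|_{L^2(B_1,g_k)} \to 0$, one can pass to the limit in the weak formulation exactly as in the proof of \eqref{conv-curvature}: the $W^{1,p}$ ($p<4$) convergence of $g_k$ and $g_k^{-1}$ handles the coefficients, and a Hölder estimate of the form used at the end of Lemma \ref{curvature.convergence} kills the right-hand side. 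Hence $R_g = 0$ weakly, and by the regularity afforded by the hypotheses $R_g = 0$ pointwise (or at least as an $L^1$ function).

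Next I would bring in the $Q$-curvature. By definition \eqref{Q.def}, in dimension $4$,
\[
Q_g = -\tfrac{1}{12}\De_g R_g - \tfrac{1}{4}|Ric_g|^2 + \tfrac{1}{12}R_g^2.
\]
Once $R_g \equiv 0$ this collapses to $Q_g = -\tfrac14 |Ric_g|^2 \le 0$, so $Q_g = -Q_g^{-}$ and it suffices to show $Q_g^{-} = 0$, i.e. that the negative part of the $Q$-curvature of the limit vanishes. The heuristic from the text ("$R_g = Q_g^{-} = 0$ implies $Ric_g = 0$") is precisely this pointwise algebra; the analytic content is to propagate $\int_{B_1} Q_{g_k}^{-} dV_{g_k} \to 0$ to the limit. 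For this I would use lower semicontinuity: $Q_{g_k}$ is a curvature quantity built from two derivatives of $g_k$, and I would argue that $Q_{g_k} dV_{g_k} \to Q_g dV_g$ in the sense of distributions (again exploiting that the metrics converge in $\cap_{p<4} W^{1,p}$ together with their inverses, so that the Paneitz-type fourth-order terms, after integration by parts against a test function, only see derivatives of the coefficients of order at most one times polynomial nonlinearities in $g_k, g_k^{-1}$, mirroring the bound on the integrands in Lemma \ref{curvature.convergence}). Since $Q_{g_k}^{-} \ge 0$ and its total mass tends to $0$, combined with distributional convergence of $Q_{g_k} dV_{g_k}$, one gets that the negative part of the limiting measure $Q_g dV_g$ is zero, hence $Q_g \ge 0$; but $Q_g = -\tfrac14 |Ric_g|^2 \le 0$, forcing $|Ric_g|^2 = 0$.

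Concretely, the steps in order are: (i) pass to the limit in \eqref{eq-conf} using $\|R_{g_k}\|_{L^2(B_1,g_k)} \to 0$ and the $W^{1,p}$ convergence of $g_k, g_k^{-1}$ to conclude $R_g = 0$ distributionally, hence (by elliptic regularity in \eqref{eq-conf} with zero right-hand side, as in Corollary \ref{convergence}) pointwise; (ii) rewrite \eqref{Q.def} at $g$ with $R_g = 0$ to get $Q_g = -\tfrac14|Ric_g|^2 \le 0$; (iii) show $Q_{g_k} dV_{g_k} \to Q_g dV_g$ as distributions, using the integration-by-parts / polynomial-bound scheme of Lemma \ref{curvature.convergence}; (iv) combine with $\int Q_{g_k}^{-} dV_{g_k} \to 0$ and nonnegativity of $Q_{g_k}^{-}$ to get $Q_g \ge 0$; (v) conclude $|Ric_g|^2 = 0$, i.e. $Ric_g = 0$.

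The main obstacle is step (iii): the $Q$-curvature contains a $\De_g R_g$ term, i.e. four derivatives of the metric, so the naive distributional pairing is not obviously controlled by $W^{1,p}$, $p<4$, convergence. The remedy — and the reason it works — is that $\int_{B_1} \De_{g_k} R_{g_k}\, \varphi\, dV_{g_k}$ integrates by parts twice onto $\varphi$, so no derivatives of $R_{g_k}$ actually survive; moreover one can also replace $R_{g_k}$ in this term using that $\int |R_{g_k}|^2 dV_{g_k} \to 0$, so the $\De_g R_g$ contribution to the limit is $0$ regardless. The remaining terms $|Ric_{g_k}|^2$ and $R_{g_k}^2$ are handled by the Bochner-formula bookkeeping of Lemma \ref{curvature.convergence} (for $|Ric|^2$, expand via the Bochner identity as in \eqref{Bochner-for}) together with $\|R_{g_k}\|_{L^2} \to 0$ for the $R_{g_k}^2$ term; once this is arranged, steps (iv)–(v) are immediate.
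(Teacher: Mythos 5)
Your step (iii) contains the genuine gap. To pass $Q_{g_k}\,dV_{g_k}\to Q_g\,dV_g$ distributionally you must in particular show $\int_{B_1}\varphi\,|Ric_{g_k}|^2\,dV_{g_k}\to\int_{B_1}\varphi\,|Ric_g|^2\,dV_g$, and you propose to do this by the ``Bochner-formula bookkeeping'' of Lemma \ref{curvature.convergence}. That scheme does not apply here: in \eqref{Bochner-for} the Ricci tensor enters \emph{linearly}, paired against fixed smooth vector fields, so a single integration by parts leaves integrands that are at worst quadratic in first derivatives of $g_k$, which is exactly what the $W^{1,p}$ ($p<4$) convergence of $g_k$ and $g_k^{-1}$ controls. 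By contrast, $|Ric_{g_k}|^2$ is quadratic in second derivatives of $g_k$; integrating by parts against a test function cannot remove them (you trade $\partial^2 g\cdot\partial^2 g$ for $\partial g\cdot\partial^3 g$), and quadratic curvature quantities are not continuous under the available weak convergence --- at best weakly lower semicontinuous, and even that presupposes a uniform local $L^2$ bound on $Ric_{g_k}$, which is not a hypothesis and would itself have to be proved. So (iii) as justified fails, and (iv)--(v) with it. (Steps (i)--(ii) are also slightly off: \eqref{eq-conf} is not available for the abstract sequence of Lemma \ref{curvature.convergence}, since the $g_k$ there are not given as conformal to a fixed background; $R_g=0$ could be obtained by a trace/divergence-structure variant of that lemma, but, as it turns out, it is not needed at all.)

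The paper sidesteps the limit-identification problem entirely by working at the level of $g_k$: pairing \eqref{Q.def} for $g_k$ with a nonnegative $\varphi\in\mathcal{D}(B_1)$ gives $\tfrac14\int_{B_1}\varphi|Ric_{g_k}|^2dV_{g_k}=-\int_{B_1}\varphi Q_{g_k}dV_{g_k}-\tfrac1{12}\int_{B_1}\varphi\Delta_{g_k}R_{g_k}dV_{g_k}+\tfrac1{12}\int_{B_1}\varphi R_{g_k}^2dV_{g_k}$. The middle term is moved onto $\varphi$, i.e. $\int_{B_1}\varphi\Delta_{g_k}R_{g_k}dV_{g_k}=\int_{B_1}R_{g_k}(\sqrt{|g_k|}g_k^{ij}\varphi_i)_j\,dx\to0$ because $\|R_{g_k}\|_{L^2(g_k)}\to0$ while the other factor is bounded in $L^2$ (this is exactly your correct observation about the $\Delta R$ term); the last term tends to $0$; and $-\int_{B_1}\varphi Q_{g_k}dV_{g_k}\le\int_{B_1}\varphi Q_{g_k}^{-}dV_{g_k}\to0$. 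Hence $\|Ric_{g_k}\|_{L^2(B_r,g_k)}\to0$ for every $r<1$, and the final assertion of Lemma \ref{curvature.convergence} (with $\tfrac n2=2$) yields $Ric_g=0$ directly, with no need to know $R_g$ or $Q_g$ or any convergence of $Q_{g_k}dV_{g_k}$. The missing idea in your proposal is to use the sign of $Q_{g_k}^{-}$ to control $\int\varphi|Ric_{g_k}|^2$ along the sequence, rather than to identify the limit measure.
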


\proof
For any nonnegative $\varphi\in\mathcal{D}(B_1)$, we can show
$$
\int_{B_1}\varphi\Delta_{g_k} R_{g_k}dV_{g_k}=
\int_{B_1}R_{g_k}\Delta_{g_k}\varphi dV_{g_k}=
\int_{B_1}R_{g_k}(\sqrt{|g_k|}g^{ij}_k\varphi_i)_jdx
\rightarrow 0,
$$
and
$$
\int_{B_1}Q_{g_k}\varphi dV_{g_k}\geq
-\int_{B_1}Q_{g_k}^{-}\varphi dV_{g_k}\rightarrow 0.
$$
Then, by \eqref{Q.def}, we get $Ric_g=0$.
\endproof

\subsection{Conformal Ricci flat metrics}\label{ricci.flat.cl}

In this section, we classify Ricci flat metrics which are conformal to the Euclidean metric. 
\begin{prop}\label{bble}
Let $U$ be a connected open set in $\Real^{n}$ with $n\geq 3$. Suppose that $g=u^{\frac{4}{n-2}}g_{euc}$ is a smooth metric on $U$. If $Ric_g\equiv0$,
then either $u$ is a constant function or $u=\frac{c_0}{|x-x_0|^{n-2}}$ for some positive constant $c_0$ and $x_0\in \Real^n$.
\end{prop}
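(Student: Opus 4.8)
The plan is to reduce the vanishing of the Ricci curvature to a PDE for $u$ and then invoke the classification of entire solutions of that PDE. First I would observe that a conformal metric $g = u^{\frac{4}{n-2}} g_{euc}$ on $U$ is Ricci flat precisely when the trace-free part of $Ric_g$ vanishes and the scalar curvature $R_g$ vanishes. From \eqref{eq-conf} with $R_{g_0} = 0$, the condition $R_g = 0$ is equivalent to $\Delta u = 0$ in $U$, so $u$ is a (positive) harmonic function. The trace-free part of \eqref{Ric} (with $g_0 = g_{euc}$ and $\phi = \frac{4}{n-2}\log u$) gives a second-order equation for $u$; after the conformal change the statement ``$Ric_g$ is pure trace'' becomes the classical condition that the symmetric $2$-tensor $\mathrm{Hess}\, u^{-\frac{2}{n-2}}$ (equivalently $u^{-1}\mathrm{Hess}\,u - \frac{n}{n-2} u^{-2} du\otimes du$, up to constants) is proportional to $g_{euc}$. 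I would set $w = u^{-\frac{2}{n-2}}$, so that the Ricci-flatness of $g$ says exactly that $\mathrm{Hess}\, w = \phi\, g_{euc}$ for some function $\phi$ on $U$; indeed $g = w^{-2} g_{euc}$ and a conformally flat metric of the form $w^{-2}g_{euc}$ is flat iff $w^{-2} g_{euc}$ has the standard form, which is the content of the classical fact that flat conformally-Euclidean metrics are exactly (restrictions of) stereographic-type metrics.

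Next I would exploit $\mathrm{Hess}\, w = \phi\, g_{euc}$. Taking the trace gives $\Delta w = n\phi$, and differentiating the relation $\partial_i\partial_j w = \phi\,\delta_{ij}$ shows $\partial_k\phi\,\delta_{ij} = \partial_j\phi\,\delta_{ik}$, which for $n\geq 3$ forces $\nabla\phi$ to vanish unless… more precisely it forces $\partial_k \phi = 0$ for $k\neq i$ and all $i$, hence $\phi$ is a constant, say $\phi \equiv a \in \Real$. Then $\mathrm{Hess}\, w = a\, g_{euc}$ integrates to $w(x) = \frac{a}{2}|x|^2 + \langle b, x\rangle + c$ for some $b\in\Real^n$, $c\in\Real$, at least on each connected component — and $U$ is connected by hypothesis. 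Completing the square, $w(x) = \frac{a}{2}|x - x_0|^2 + c'$ when $a\neq 0$ (with $x_0 = -b/a$), or $w$ is affine when $a = 0$. Translating back via $u = w^{-\frac{n-2}{2}}$: if $a = 0$ and $b = 0$ then $w$ is constant and so is $u$; if $a = 0$ and $b\neq 0$ then $u = (\langle b,x\rangle + c)^{-\frac{n-2}{2}}$, but this $u$ cannot be smooth and positive on a metric — actually I would rule it out by noting it is not harmonic (its Laplacian is a nonzero multiple of $(\langle b, x\rangle+c)^{-\frac{n+2}{2}}$), contradicting $\Delta u = 0$; if $a\neq 0$, positivity of $w$ plus harmonicity of $u$ forces $c' = 0$, giving $w = \frac{a}{2}|x-x_0|^2$ and hence $u = \frac{c_0}{|x-x_0|^{n-2}}$ with $c_0 = (a/2)^{-\frac{n-2}{2}}$. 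One should double-check with $\Delta u = 0$: indeed $|x-x_0|^{2-n}$ is harmonic away from $x_0$, consistent with $g$ being defined only on $U$ (and $x_0$ may or may not lie in $U$).

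The main obstacle, and the step I would be most careful with, is correctly deriving that ``$Ric_g$ pure trace'' for a conformally Euclidean metric is equivalent to $\mathrm{Hess}(u^{-\frac{2}{n-2}})$ being pointwise proportional to $g_{euc}$ — this is a direct but slightly fiddly computation from \eqref{Ric}, and one must track the cancellations in the traceless part carefully. An alternative route that sidesteps this algebra is to use the Cotton/Weyl tensor characterization: in dimension $n\geq 3$ the metric $g = u^{\frac{4}{n-2}}g_{euc}$ is already conformally flat, and a Ricci-flat conformally flat metric has vanishing Schouten tensor derivative, so the classification of $\mathrm{Hess}\,w$ above can be invoked after recognizing $w^{-2}g_{euc}$ as the form in which the Schouten tensor reads $-\frac{1}{w}\mathrm{Hess}\,w + \frac{1}{2w^2}|\nabla w|^2 g_{euc}$, whose trace-free vanishing is again $\mathrm{Hess}\,w \parallel g_{euc}$. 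Either way, once we know $\mathrm{Hess}\,w$ is a function times $g_{euc}$, the integration and the case analysis are routine, and connectedness of $U$ is what guarantees a single constant $a$ and a single center $x_0$.
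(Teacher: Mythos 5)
Your proposal is correct, but it follows a genuinely different route from the paper. You reduce $Ric_g\equiv 0$ to two scalar statements: harmonicity of $u$ (from \eqref{eq-conf}, since $R_{g_{euc}}=0$ and $R_g=\mathrm{tr}_g Ric_g=0$), and, writing $w=u^{-\frac{2}{n-2}}$ so that $g=w^{-2}g_{euc}$, the concircular equation $\mathrm{Hess}\,w=\phi\,g_{euc}$ expressing the vanishing of the trace-free Ricci; indeed, with $f=-\log w$ one has $\mathrm{Hess}\,f-df\otimes df=-\frac{1}{w}\mathrm{Hess}\,w$, so by \eqref{Ric} the tensor $Ric_g$ equals $\frac{n-2}{w}\mathrm{Hess}\,w$ plus a multiple of $g_{euc}$, confirming the identity you were cautious about. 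Differentiating $\partial_i\partial_j w=\phi\,\delta_{ij}$ and using symmetry of third derivatives forces $\phi$ to be a constant $a$ on the connected set $U$, integration gives $w=\frac{a}{2}|x|^2+\langle b,x\rangle+c$ on $U$, and harmonicity of $u=w^{-\frac{n-2}{2}}$ kills the nonconstant affine case and forces the completed square to have zero constant term, leaving exactly $u$ constant or $u=c_0|x-x_0|^{2-n}$; these computations all check out. The paper argues instead geometrically: since $g$ is conformally flat, $Ric_g=0$ implies $g$ is flat, so there is a local isometry of $(V,g)$ into $\R^n$, which is conformal and hence extends by Liouville's rigidity theorem to a M\"obius transformation of $\R^n\cup\{\infty\}$; harmonicity and unique continuation of $u$ then propagate the identification from $V$ to all of $U$, and the explicit form of M\"obius maps yields the dichotomy. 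Your approach buys self-containedness: it avoids quoting Liouville's theorem and the unique-continuation step, using only an elementary Hessian equation; the paper's approach is shorter modulo Liouville and makes the geometric picture (local flatness, inversions) transparent. Two minor asides in your alternative-route paragraph are off but harmless: the Schouten tensor of $w^{-2}g_{euc}$ is $\frac{1}{w}\mathrm{Hess}\,w-\frac{|\nabla w|^2}{2w^2}g_{euc}$ (opposite sign to what you wrote), and for a Ricci-flat metric the Schouten tensor itself vanishes, which is in fact all you need there.
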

\begin{proof}
Since  $(U,g)$ is conformally flat, 
$Ric_g=0$ is  equivalent to the sectional curvature $K_g=0$ here. Hence,  
we can find a domain $V\subset U$
and an embedding $\phi:V\rightarrow\R^n$, such that 
$g=\phi^*(g_{euc})$ in $V$. Obviously, $\phi$ is  conformal from $(V,g)$ to $\R^n$. By  Liouville's
Theorem (see \cite{IM98}), we can extend $\phi$ to 
$\hat{\phi}$, a M\"obious transformation of $\R^n\cup\{\infty\}$. 

Let $U'=U\setminus\{\hat{\phi}^{-1}(\infty)\}$ and define $\hat{g}=(\hat{\phi}|_{U'})^*(g_{euc}):=v^\frac{4}{n-2}g_{euc}$ in $U'$. Since both $R_{\hat{g}}$ and $R_{g}$
are 0 in $U'$, by the equation \eqref{eq-conf},  $u$ and $v$ are harmonic in $U'$. Since $u=v$ in $V$, we get $u\equiv v$ in  $U'$. Then $v$ is smooth in $U$, hence
$\hat{\phi}^{-1}(\infty)\notin U'$, and $U'=U$. 

By Liouville's theorem, we can write $\hat{\phi}(x)=\lambda A(x-x_0)+b$ or
$\hat{\phi}(x)=\lambda \frac{A(x-x_0)}{|A(x-x_0)|^2}+b$, where $\lambda\in \R\setminus\{0\}$, $A\in O(n)$ and $x_0$, $b\in\R^n$. This complete the proof.
\end{proof}

\section{A sequence converges to a Ricci flat metric}
In this section, we study properties of a metric sequence, whose limit is conformal Ricci flat.
We intend to show that in our case, the limit must be an inverted Euclidean metric.

We start with some technical lemmas as follows.
\begin{lem}\label{G}
Let $G(t)$ be one of the following functions:
$$
G_1=\frac{1+(t/2+\mu)^2}{1+(t+\mu)^2},\s
G_2=\frac{(t/2+\nu)^2}{(t+\nu)^2},
$$
where $\mu$ and $\nu$ are given constants and
$\nu>0$. Let $\alpha\in(\frac{1}{2},1)$. If $t>0$ and $G(t)\geq\alpha$, then for any $0<t'<t$ we have $G(t')>\alpha$.
\end{lem}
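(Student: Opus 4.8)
The plan is to reduce the statement to a monotonicity property of the two explicit rational functions $G_1$ and $G_2$ on the interval $(0,\infty)$, viewed as functions of the real variable $t$. Concretely, I would show that once $G_i(t)\geq\alpha$ for some $\alpha\in(\frac12,1)$, the function $G_i$ is strictly decreasing at all points of $(0,t]$, so that $G_i(t')>G_i(t)\geq\alpha$ for every $0<t'<t$; in fact it is cleaner to prove directly that $G_i$ is strictly decreasing on the whole sub-level region $\{t>0:G_i(t)<1\}$ and observe that $G_i(t)\geq\alpha>\frac12$ forces $t$ to lie in that region (after checking $G_i<1$ there, which also uses $\alpha>\tfrac12$ only mildly).

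First I would treat $G_2$, which is the simpler case: writing $G_2(t)=\left(\frac{t/2+\nu}{t+\nu}\right)^2$, the inner quotient $h(t)=\frac{t/2+\nu}{t+\nu}=\frac12+\frac{\nu/2}{t+\nu}$ is manifestly positive and strictly decreasing for $t>-\nu$ (since $\nu>0$), with $h(0)=1$ and $h(t)\downarrow\frac12$ as $t\to\infty$. Hence $G_2=h^2$ is strictly decreasing on $(0,\infty)$ with values in $(\tfrac14,1)$, and the claim for $G_2$ is immediate: $G_2(t')>G_2(t)\geq\alpha$. For $G_1=\frac{1+(t/2+\mu)^2}{1+(t+\mu)^2}$ I would compute the sign of the derivative. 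Setting $a=t/2+\mu$ and $b=t+\mu$ (so $b-a=t/2>0$ and $b'=1$, $a'=\tfrac12$), one gets
\[
G_1'(t)=\frac{a(1+b^2)-2b(1+a^2)}{(1+b^2)^2}=\frac{(a-2b)+ab(b-2a)}{(1+b^2)^2}.
\]
The numerator is $N(t)=(a-2b)+ab(b-2a)$. I would argue that on the range where $G_1(t)\geq\alpha>\tfrac12$ — equivalently $2(1+a^2)\geq 1+b^2$, i.e. $b^2\le 1+2a^2$ — the numerator $N$ is negative, and moreover $N<0$ on the whole interval $(0,t]$ because the constraint $b^2\le 1+2a^2$ only propagates favorably as $t$ decreases (the gap $b-a=t/2$ shrinks). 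That gives $G_1'<0$ on $(0,t]$ and hence $G_1(t')>G_1(t)\ge\alpha$ for $0<t'<t$.

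The main obstacle is the sign analysis of $N(t)$ for $G_1$: unlike $G_2$, $G_1$ need not be monotone on all of $(0,\infty)$ (for large $t$, $G_1\to 1$ from below or above depending on $\mu$, and near $t=0$ its behavior depends on the sign of $\mu$), so one genuinely has to use the hypothesis $G_1(t)\ge\alpha>\tfrac12$ to confine $t$ to a region where the derivative has a fixed sign. The cleanest route is probably to avoid the derivative altogether and argue algebraically: show that $G_1(t')\ge\alpha$ fails only if $G_1(t)<\alpha$, by comparing $G_1(t')-\alpha$ and $G_1(t)-\alpha$ directly after clearing denominators, reducing everything to a polynomial inequality in $t,t',\mu,\alpha$ that factors through $(t-t')>0$. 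I would carry out that elimination, isolate the factor $t-t'$, and check that the remaining factor is positive given $b^2\le\frac{1}{2\alpha-1}\big(2\alpha-1+2\alpha a^2\big)$ — wait, more simply given $\alpha>\tfrac12$ and $G_1(t)\ge\alpha$. Once that polynomial positivity is verified (a short but slightly tedious computation), both cases are complete.
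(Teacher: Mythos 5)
Your treatment of $G_2$ is fine (writing $G_2=h^2$ with $h(t)=\tfrac12+\tfrac{\nu/2}{t+\nu}$ strictly decreasing and positive is a clean, complete argument), but the $G_1$ case contains a genuine gap: the central claim that $G_1(t)\ge\alpha>\tfrac12$ forces $G_1'<0$ on $(0,t]$ is false. Take $\mu=-10$, $t=1$: then $a=-9.5$, $b=-9$, so $G_1(1)=\frac{91.25}{82}>1>\alpha$, while your numerator $N=a(1+b^2)-2b(1+a^2)=-779+1642.5>0$, i.e.\ $G_1$ is strictly \emph{increasing} there (and on all of $(0,1]$). So the hypothesis $G_1(t)\ge\alpha$ does not confine $t$ to a region of fixed derivative sign, and the conclusion $G_1(t')>\alpha$ holds in such examples for a different reason (namely $G_1>1$ near $0$ when $\mu<0$), not by monotonicity. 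Your fallback algebraic sketch fails at the same point: clearing denominators, $G_1(s)\ge\alpha$ is $p(s)\le 0$ with $p(s)=(4\alpha-1)s^2+4(2\alpha-1)\mu s+4(\alpha-1)(1+\mu^2)$, and the difference $p(t)-p(t')$ factors as $(t-t')\bigl[(4\alpha-1)(t+t')+4(2\alpha-1)\mu\bigr]$; the bracketed cofactor is \emph{not} positive under your hypotheses (in the example above with $\alpha=0.6$, $t=1$, $t'=\tfrac12$ it equals $-5.9$), so "$p(t)\le0\Rightarrow p(t')\le p(t)$" cannot be extracted this way.

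The missing ingredient is the constant term: $p(0)=4(\alpha-1)(1+\mu^2)<0$ because $\alpha<1$ (equivalently $G_1(0)=1>\alpha$), while the leading coefficient $4\alpha-1$ is positive because $\alpha>\tfrac12$. Hence the two real roots of $p$ straddle the origin, so $\{s>0:p(s)\le0\}$ is an initial interval $(0,a_2]$ on which $p<0$ except possibly at the right endpoint; equivalently, by convexity, $p(t')\le(1-\tfrac{t'}{t})p(0)+\tfrac{t'}{t}p(t)<0$ for $0<t'<t$. This root-location/convexity step (which is exactly the paper's proof, applied verbatim to both $G_1$ and $G_2$) is what your proposal needs and does not supply; without it the $G_1$ case is not proved.
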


\begin{proof}
First, we assume $G(t)=\frac{1+(t/2+\mu)^2}{1+(t+\mu)^2}$.
The inequality
\[\frac{1+(\frac{t}{2}+\mu)^2}{1+(t+\mu)^2}\geq\alpha,\]
is equivalent to the below form
\[p(t)=(4\alpha-1)t^2+(8\alpha\mu-4\mu)t+4\alpha+4\alpha\mu^2-4-4\mu^2\leq 0.\]

For the sake of convenience, we write it as
\[p(t)=At^2+4(2\alpha-1)\mu t+B\leq 0,\]
where $A>0$, $B<0$. Let $a_1$ and $a_2$ be the roots of $p=0$. Since $B/A<0$, we may assume $a_1<0<a_2$. Therefore, if $p(t)\leq 0$, then $p(t')<0$.

When $G(t)=\frac{(t/2+\nu)^2}{(t+\nu)^2}$, from the inequality
$$
\frac{(\frac{t}{2}+\nu)^2}{(t+\nu)^2}\geq\alpha,
$$
we get
$$
p_2(t)=(4\alpha-1)t^2+(8\alpha\nu-4\nu)t+4\alpha^2\nu^2-4\nu^2\leq 0.
$$
We write it as
$$
p_2(t)=A't^2+4(2\alpha-1)\mu t+B'\leq 0,
$$
where $A'>0$, $B'< 0$. Similar to the first case, we get the desired result.
\end{proof}

A direct corollary of Lemma \ref{G} is as follows.
\begin{cor}\label{F}
Let $F(t)$ be one of the following two functions:
\[F_1=\frac{\arctan{(\lambda t+\mu)}-\arctan(\frac{\lambda t}{2}+\mu)}{\arctan (\frac{\lambda t}{2}+\mu)-\arctan(\frac{\lambda t}{4}+\mu)},\s
F_2=\frac{\frac{1}{t/2+\nu}-\frac{1}{t+\nu}}{\frac{1}{t/4+\nu}-\frac{1}{t/2+\nu}},
\]
where $\lambda,\nu>0$ and $\mu$ are given constants. Assume $t>0$.
Then $F(t)\geq\frac{3}{2}$ implies that $F(t/2)>\frac{3}{2}$.
\end{cor}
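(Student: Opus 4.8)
The plan is to reduce both cases to Lemma \ref{G}, the point being that after a dyadic rescaling $F$ is \emph{exactly} twice a positively-weighted average of $G$.

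First I would use the fundamental theorem of calculus to put $F_1$ and $F_2$ into the common shape
\[
F(t)=\frac{\int_{t/2}^{t}\phi(x)\,dx}{\int_{t/4}^{t/2}\phi(x)\,dx},\qquad\text{where}\quad
\phi_1(x):=\frac{\lambda}{1+(\lambda x+\mu)^2},\quad \phi_2(x):=\frac{1}{(x+\nu)^2},
\]
since $\tfrac{d}{dx}\arctan(\lambda x+\mu)=\phi_1(x)$ and $\tfrac{d}{dx}\bigl(-\tfrac{1}{x+\nu}\bigr)=\phi_2(x)$. Both $\phi_i$ are smooth and strictly positive on $(0,\infty)$, so all integrals are positive and $F>0$. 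Next, substituting $x=2y$ in the numerator I would rewrite this as
\[
\frac{F(t)}{2}=\frac{\int_{t/4}^{t/2}\frac{\phi(2y)}{\phi(y)}\,\phi(y)\,dy}{\int_{t/4}^{t/2}\phi(y)\,dy},
\]
so that $F(t)/2$ is the average of $y\mapsto\phi(2y)/\phi(y)$ over $[t/4,t/2]$ against the positive weight $\phi(y)\,dy$. A direct computation identifies this ratio with $G$:
\[
\frac{\phi_1(2y)}{\phi_1(y)}=\frac{1+(\lambda y+\mu)^2}{1+(2\lambda y+\mu)^2}=G_1(2\lambda y),\qquad
\frac{\phi_2(2y)}{\phi_2(y)}=\frac{(y+\nu)^2}{(2y+\nu)^2}=G_2(2y).
\]
Hence, after the change of variable $\tau=cy$ (with $c=2\lambda$ in the first case, $c=2$ in the second), $F(t)/2$ is a positively-weighted average of $\tau\mapsto G(\tau)$ over the interval $I_t:=[ct/4,\,ct/2]$, and likewise $F(t/2)/2$ is such an average over $I_{t/2}=[ct/8,\,ct/4]$.

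With this in hand the argument runs as follows. Assume $F(t)\geq\frac{3}{2}$. Then the weighted average of $G$ over $I_t$ is at least $\frac{3}{4}$, so by continuity of $G$ on the compact set $I_t$ there is $\tau^*\in I_t$ with $G(\tau^*)\geq\frac{3}{4}$. Since $\frac{3}{4}\in(\frac{1}{2},1)$, Lemma \ref{G} applies (with $\alpha=\frac{3}{4}$) and gives $G(\tau')>\frac{3}{4}$ for every $0<\tau'<\tau^*$; as $\tau^*\geq ct/4$ this yields $G>\frac{3}{4}$ on $(0,ct/4)$ together with $G(ct/4)\geq\frac{3}{4}$. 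Therefore $G\geq\frac{3}{4}$ throughout $I_{t/2}=[ct/8,ct/4]$, with strict inequality everywhere except possibly at the single point $ct/4$, and averaging against the positive weight gives $F(t/2)/2>\frac{3}{4}$, i.e.\ $F(t/2)>\frac{3}{2}$.

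The whole thing is bookkeeping once the reduction is set up: the only genuine input is Lemma \ref{G}. The one place that deserves a little care is the final strict inequality, which rests on the elementary fact that the weighted average of a continuous function that is $\geq\frac{3}{4}$ on all of $I_{t/2}$ and $>\frac{3}{4}$ on a subset of positive measure is itself strictly larger than $\frac{3}{4}$; this is why the conclusion $F(t/2)>\frac{3}{2}$ is strict even though $F(t)\geq\frac{3}{2}$ is only an inequality.
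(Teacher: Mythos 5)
Your argument is correct and is essentially the paper's: both cases are reduced to $G$ on the dyadic intervals and then Lemma \ref{G} is invoked with $\alpha=\frac{3}{4}$. The only difference is that the paper compresses your FTC/weighted-average step into one application of the Cauchy mean value theorem, writing $F_1(t)=2G_1(\xi)$ with $\xi\in(\frac{\lambda t}{2},\lambda t)$ and $F_1(t/2)=2G_1(\xi')$ with $\xi'\in(\frac{\lambda t}{4},\frac{\lambda t}{2})$, so that $\xi'<\xi$ and the strict inequality follows at once, whereas you recover the same strictness through the positive-measure averaging remark.
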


\begin{proof}
The Cauchy mean value theorem implies that there exist $\xi\in (\frac{\lambda}{2}t,\lambda t)$ and $\xi'\in(\frac{\lambda}{4}t,\frac{\lambda}{2}t)$, such that
\[2G_1(\xi)=F_1(t),\quad 2G_1(\xi')=F_1(t/2).\]
Then $F_1(t)\geq \frac{3}{2}$ is equivalent to $G_1(\xi)\geq \frac{3}{4}$. Since $0<\xi'<\xi$, the desired result follows from lemma \ref{G}.

Since the proof of the case of $F=F_2$ is similar, we omit it.
\end{proof}

Before we state the next result, we define some notations.
For a unit vector $v$ and a number $\alpha\in(0,\pi)$, we define
$$
C_\alpha(v)=\{x\in\R^n:\measuredangle(x,v)<\alpha\}.
$$

Let $g$ be a metric defined on $C_\alpha(v)$ with
$$
\|g-g_{euc}\|_{C^2(C_\alpha(v)\cap B_1)}<\tau_1,
$$
and
$g'=u^\frac{4}{n-2}g$ with
$$
\int_{C_\alpha(v)\cap B_1}|Ric_{g'}|^{\frac{n}{2}}dV_{g'}<\tau_2.
$$

We denote $L_k(g')$ to be the length of $\ga=tv$ on the interval $[2^{-k-1},2^{-k}]$ with respect to the metric $g'$.
\begin{lem}\label{cone}
There exists an $\epsilon_1>0$, such that if
$\tau_1,\tau_2<\epsilon_1$, then
\[\frac{L_1(g')}{L_2(g')}\geq \frac{3}{2}\]
implies that
\[\frac{L_{2}(g')}{L_{3}(g')}>\frac{3}{2}.\]
\end{lem}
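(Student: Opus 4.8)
The plan is to reduce the statement to the one-variable monotonicity already encoded in Corollary \ref{F}. On the cone $C_\alpha(v)\cap B_1$ the metric $g$ is $C^2$-close to $g_{euc}$, and $g'=u^{4/(n-2)}g$ has small $L^{n/2}$ Ricci energy; hence by the $\epsilon$-regularity results (Lemma \ref{regularity}, applied after a covering argument as in Corollary \ref{convergence}) the conformal factor $u$ is, on each dyadic annulus $\{2^{-k-1}\le|x|\le 2^{-k}\}$ inside the cone, quantitatively close in $W^{2,q}$—hence in $C^0$ after rescaling—to a \emph{harmonic} positive function, since $u$ solves an equation $-\Delta_{g_k}u = f u$ with $\|f\|_{L^{n/2}}$ controlled by $\tau_2$. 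The first step is therefore to fix $\epsilon_1$ small enough that, along the ray $\gamma=tv$, the restriction $u(tv)$ is within a multiplicative factor $(1\pm\eta)$ of the value of the relevant model harmonic function, for $\eta=\eta(\epsilon_1)\to 0$ as $\epsilon_1\to 0$.

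Next I would identify the model. A positive harmonic function on an annulus, restricted to a radial segment, is (up to the closeness above) an affine combination $a + b/t^{n-2}$ of the two radial harmonic functions on $\R^n\setminus\{0\}$; more generally, after a Möbius normalization as in Proposition \ref{bble}, the candidate limiting conformal Ricci-flat factors are exactly constants, $c_0|x-x_0|^{-(n-2)}$, or their Möbius images—whose radial profiles along $\gamma$ are, in the variable $t$, of the form governed by the functions $G_1,G_2$ (equivalently $F_1,F_2$) of Corollary \ref{F}. The length of $\gamma$ on $[2^{-k-1},2^{-k}]$ is $L_k(g') = \int_{2^{-k-1}}^{2^{-k}} u^{2/(n-2)}(tv)\,|v|_g\,dt$, and with $g$ nearly Euclidean this is, up to $(1\pm\eta)$, $\int u^{2/(n-2)}(tv)\,dt$. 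Under a further change of variables matching the dyadic scaling $t\mapsto t/2$, the ratio $L_1/L_2$ becomes—modulo the $\eta$-errors—one of the quantities $F_i$ evaluated at the appropriate parameter, while $L_2/L_3$ is $F_i$ at half that parameter. So the core inequality to prove is: if the model ratio is $\ge \tfrac32 - C\eta$ then the halved model ratio is $> \tfrac32 + C\eta$, which is precisely Corollary \ref{F} with a little room to spare, since that corollary gives a \emph{strict} inequality $F(t/2) > \tfrac32$ from $F(t)\ge\tfrac32$. Choosing $\epsilon_1$ (hence $\eta$, hence $C\eta$) small enough to absorb the discrepancy closes the argument.

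The main obstacle I anticipate is making the passage ``$L_k$ is controlled by the model radial profile'' genuinely quantitative and uniform: the $\epsilon$-regularity estimates give $W^{2,q}$ bounds with $q<n/2$, not pointwise control directly, so one must (i) use the Sobolev embedding $W^{2,q}\hookrightarrow C^0$ in the right dimension-dependent range (or bootstrap on a smaller annulus), (ii) track that the harmonic comparison function is positive and that $u$ stays bounded away from $0$ and $\infty$ on each annulus—this is exactly what the two-sided bound $e^{-\fint\log v}\|v\|_{W^{2,q}} + e^{\fint\log v}\|v^{-1}\|_{W^{2,q}}\le C$ in Lemma \ref{regularity} supplies—and (iii) verify that the harmonic profile on an annulus, although not globally $a+b|x|^{-(n-2)}$, is close enough to a radial one after averaging over the sphere that its behavior along the fixed ray $\gamma$ is captured by $F_1$ or $F_2$ up to the same order $\eta$. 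The secondary nuisance is bookkeeping the constants: $\epsilon_1$ must be chosen \emph{after} fixing how much slack Corollary \ref{F} provides, which in turn depends on the parameters $\lambda,\mu,\nu$ of the model; one needs a compactness or normalization argument to see that these parameters range over a set on which the slack in Corollary \ref{F} is uniform, or else argue by contradiction: if no such $\epsilon_1$ worked, extract a sequence $g_k'$ with $\tau_1,\tau_2\to 0$, pass to a conformal Ricci-flat limit by Lemma \ref{curvature.convergence} and Corollary \ref{convergence}, apply Proposition \ref{bble} to pin the limit profile to the $G$/$F$ model, and contradict the strict inequality in Corollary \ref{F}. I would in fact organize the write-up around this contradiction/compactness route, as it cleanly separates the PDE convergence (Section 2 machinery) from the elementary monotonicity (Corollary \ref{F}).
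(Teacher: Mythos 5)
Your final organization---argue by contradiction, extract a limiting conformal factor via Corollary \ref{convergence} and Lemma \ref{curvature.convergence}, classify it by Proposition \ref{bble}, and contradict the strict inequality of Corollary \ref{F}---is exactly the paper's proof; the only ingredients the paper spells out that you leave implicit are the trace theorem used to pass the length ratios $L_i/L_{i+1}$ to the limit along the ray, and the two trivial side cases ($u_\infty$ constant, giving ratio $2$, and a pole at the cone vertex, giving ratio $\tfrac12$), both handled immediately within your framework. So the proposal is correct and takes essentially the same approach as the paper.
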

\begin{proof}
We prove this result by contradiction. Suppose that there exist a sequence of $g_k$ and $g_k'=u_k^\frac{4}{n-2}g_k$, such that
$g_k\rightarrow g_{euc}$ in $C^2(C_\alpha(v)\cap B_1)$,
$\|Ric_{g_k'}\|_{L^\frac{n}{2}(C_\alpha(v)\cap B_1)}\rightarrow 0$
and
\[\frac{L_{1}(g_k')}{L_{2}(g_k')}\geq \frac{3}{2}\quad \text{and}\quad\frac{L_{2}(g_k')}{L_{3}(g_k')}\leq \frac{3}{2}.\]
By Corollary \ref{convergence}, we can find a sequence of constants $\{c_k\}$, such that
$c_ku_k$ converges to a positive function $u_\infty$ weakly in $W^{2,q}_{loc}(C_\al(v)\cap B_1)$ for any $q\in[1,\frac{n}{2})$. It is easy to check that
$u_\infty$ is harmonic, and $g_\infty=u_\infty^\frac{4}{n-2}g_{euc}$ is smooth.
Then, it follows from Lemma \ref{curvature.convergence} that
\[Ric_{g_\infty}\equiv 0.\]
By Proposition \ref{bble}, $u_\infty=constant$ or
\[u_\infty(x)=\frac{c_0}{|x-x_0|^{n-2}}.\]
It is not difficult to see that $x_0\notin C_\al(v)\cap B_1$.

Moreover, the classical trace theorem (cf. \cite{AF}) shows
\[\frac{L_1(g_k')}{L_2(g_k')}\to \frac{\int_{1/2}^{1}u_{\infty}^{\frac{2}{n-2}}(tv)dt}{\int_{1/4}^{1/2}u_{\infty}^{\frac{2}{n-2}}(tv)dt}\geq \frac{3}{2},\]
and
\begin{equation}\label{decay}
\frac{L_2(g_k')}{L_3(g_k')}\to \frac{\int_{1/4}^{1/2}u_{\infty}^{\frac{2}{n-2}}(tv)dt}{\int_{1/8}^{1/4}u_{\infty}^{\frac{2}{n-2}}(tv)dt}\leq \frac{3}{2}.
\end{equation}

Next, we show the contradictions for all the cases of $u_\infty$ could be.
\begin{itemize}
\item[$(1)$] In the case that $u_\infty=constant$, we have
\[\frac{\int_{1/4}^{1/2}u_{\infty}^{\frac{2}{n-2}}(tw)dt}{\int_{1/8}^{1/4}u_{\infty}^{\frac{2}{n-2}}(tw)dt}=2>\frac{3}{2},\]
which is a contradiction.
\item[$(2)$] In the case of $u_\infty(x)=\frac{c_0}{|x-x_0|^{n-2}}$ and $x_0=0$, we have
\[\frac{\int_{1/2}^{1}u_{\infty}^{\frac{2}{n-2}}(tw)dt}{\int_{1/4}^{1/2}u_{\infty}^{\frac{2}{n-2}}(tw)dt}=\frac{1}{2}< \frac{3}{2},\]
which is also a contradiction.
\item[$(3)$] In the case of $x_0\neq 0$ and $0<\measuredangle(v, x_0)<\pi$, by setting $a=|x_0|$ and $\theta=\measuredangle(v,x_0)$, we have $u_\infty(\ga(t))=(a^2+t^2-2at\cos\theta)^{\frac{2-n}{2}}$. Then a simple calculation shows
\[F_1(1)=\frac{\int_{1/2}^{1}u_{\infty}^{\frac{2}{n-2}}(tv)dt}{\int_{1/4}^{1/2}u_{\infty}^{\frac{2}{n-2}}(tv)dt} \quad \text{and}\quad
F_1(1/2)=\frac{\int_{1/4}^{1/2}u_{\infty}^{\frac{2}{n-2}}(tv)dt}{\int_{1/8}^{1/4}u_{\infty}^{\frac{2}{n-2}}(tv)dt}.\]
Here $F_1$ is as in Corollary \ref{F} with
\[\lambda=\frac{1}{a\sin\theta},\quad \mu=-\frac{\cos\theta}{\sin\theta}.\]
Therefore, Corollary \ref{F} implies
\[\frac{\int_{1/4}^{1/2}u_{\infty}^{\frac{2}{n-2}}(tv)dt}{\int_{1/8}^{1/4}u_{\infty}^{\frac{2}{n-2}}(tv)dt}>\frac{3}{2},\]
which contradicts \eqref{decay}.

\item[$(4)$] In the case $x_0\neq 0$ and $\measuredangle(v, x_0)=\pi$, we also set $a=|x_0|>0$, then $u_\infty(\ga(t))=(t+a)^{(2-n)}$
and
\[F_2(1)=\frac{\int_{1/2}^{1}u_{\infty}^{\frac{2}{n-2}}(tv)dt}{\int_{1/4}^{1/2}u_{\infty}^{\frac{2}{n-2}}(tv)dt} \quad \text{and}\quad
F_2(1/2)=\frac{\int_{1/4}^{1/2}u_{\infty}^{\frac{2}{n-2}}(tv)dt}{\int_{1/8}^{1/4}u_{\infty}^{\frac{2}{n-2}}(tv)dt}.\]
Then, Corollary \ref{F} also yields a contradiction in this case.
\end{itemize}
\end{proof}

For simplicity, we denote
$$
V(x,B_r(x'))=\{(1-t)x+ty:t\in(0,1),\s y\in B_r(x')\}.
$$
Now, we state and prove the main result of this section, which will play an essential role in the following section.

\begin{thm}\label{uni-bubble}
Let $\beta\in(0,1)$, 
and assume $x_k\rightarrow x_\infty$. Let $\hat{g}_k=u_k^\frac{4}{n-2}g_k$, where
$g_k$ is a smooth
metric defined on $V(x_k,B_{\beta})$, which
converges to a constant metric $g_c$ smoothly. We suppose that
\begin{itemize}
\item[$(1)$] $u_k$ converges to a positive function $u$ weakly in $W^{2,q}_{loc}(V(x_\infty, B_\beta))$ for any $q<\frac{n}{2}$;
\item[$(2)$] $\|Ric_{\hat{g}_k}\|_{L^\frac{n}{2}(V(x_k,B_\beta),\hat{g}_k)}\rightarrow 0$;
\item[$(3)$] $\text{Length}((1-t)x_k|_{t\in(0,1]},\hat{g}_k)=+\infty$ for any $k$,
\end{itemize}
then
\begin{equation}\label{u.b.}
u(x)=\frac{c_0}{g_c(x-x_\infty,x-x_\infty)^{\frac{n-2}{2}}},\s \mbox{ for some }c_0>0.
\end{equation}
\end{thm}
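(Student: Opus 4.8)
Throughout we may assume $x_\infty\neq 0$ (if $x_\infty=0$ the segment in hypothesis $(3)$ shrinks to a point and the assumption is vacuous) and, after a fixed linear change of coordinates, that $g_c=g_{euc}$; lengths of curves, the $L^{\frac{n}{2}}$-norm of $Ric$, and harmonicity are unaffected, and the conclusion transforms to $u(x)=\frac{c_0}{|x-x_\infty|^{n-2}}$. The first task is to identify the limit $u$. As in the proof of Corollary \ref{convergence}, \eqref{eq-conf} gives $-\Delta_{g_k}u_k=f_ku_k$ with right-hand side tending to $0$ in $L^{\frac{n}{2}}_{loc}$ (using $g_k\to g_{euc}$ smoothly and $\|R_{\hat g_k}\|_{L^{\frac{n}{2}}}\le C\|Ric_{\hat g_k}\|_{L^{\frac{n}{2}}}\to0$), so $u$ is positive and harmonic and $g_\infty:=u^{\frac{4}{n-2}}g_{euc}$ is smooth. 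One also checks, via Rellich's theorem and the chain rule, that $\hat g_k,\hat g_k^{-1}\to g_\infty,g_\infty^{-1}$ in $W^{1,p}_{loc}(V(x_\infty,B_\beta))$ for every $p<n$, so hypothesis $(2)$ and Lemma \ref{curvature.convergence} yield $Ric_{g_\infty}=0$. By Proposition \ref{bble}, $u$ is constant or $u(x)=\frac{c_0}{|x-x_0|^{n-2}}$ for some $x_0\in\R^n$. It thus remains to rule out the case that $u$ is constant and the case $x_0\neq x_\infty$; here hypothesis $(3)$ enters.

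Write $v_k=-x_k/|x_k|$ and reparametrise the segment of $(3)$ as the ray $\eta_k(s)=x_k+sv_k$, $s\in(0,|x_k|]$ (so $\eta_k(|x_k|)=0$), and let $L_j^{(k)}$ be the $\hat g_k$-length of $\eta_k$ over $[2^{-j-1},2^{-j}]$. Pick $\alpha>0$ and $R_0\in(0,|x_\infty|)$ with $x_\infty+\big(C_\alpha(v_\infty)\cap B_{R_0}\big)\subset V(x_\infty,B_\beta)$; for $k$ large the same inclusion holds about $x_k$, and there $g_k\to g_{euc}$ smoothly while $\|Ric_{\hat g_k}\|_{L^{\frac{n}{2}}}\to0$. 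Since rescaling about $x_k$ only improves the $C^2$-closeness of $g_k$ to $g_{euc}$ and leaves $\|Ric_{\hat g_k}\|_{L^{\frac{n}{2}}}$ invariant, Lemma \ref{cone} applies at every dyadic scale $2^{-j}\le R_0$ and gives
\[
\frac{L_j^{(k)}}{L_{j+1}^{(k)}}\geq\frac{3}{2}\ \Longrightarrow\ \frac{L_{j+1}^{(k)}}{L_{j+2}^{(k)}}>\frac{3}{2}.
\]
If the left inequality held for a single such $j$, iteration would force $L_{j+1}^{(k)}<\frac{2}{3}L_j^{(k)}$ for all larger $j$, so the part of $\eta_k$ in $B_{R_0}(x_k)$ would have finite $\hat g_k$-length; but the remaining part of the segment, $\{(1-t)x_k:t\in[R_0/|x_k|,1]\}$, is a compact subset of $V(x_k,B_\beta)$, hence of finite $\hat g_k$-length, contradicting $(3)$. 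Therefore $L_j^{(k)}/L_{j+1}^{(k)}<\frac{3}{2}$ for every dyadic $j$ with $2^{-j}\le R_0$ and every large $k$.

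Now fix such a $j$ and let $k\to\infty$. Since $\eta_k(s)\to x_\infty+sv_\infty$ uniformly, $g_k\to g_{euc}$, and $u_k\to u$ on compact subsets of $V(x_\infty,B_\beta)$, the trace theorem (applied exactly as in the proof of Lemma \ref{cone}) gives
\[
L_j^{(k)}\longrightarrow L_j^\infty:=\int_{2^{-j-1}}^{2^{-j}}u^{\frac{2}{n-2}}(x_\infty+sv_\infty)\,ds>0 ,
\]
whence $L_j^\infty/L_{j+1}^\infty\le\frac{3}{2}$ for all dyadic $j$ with $2^{-j}\le R_0$. The whole segment $\{(1-t)x_\infty:t\in(0,1]\}$ lies in $V(x_\infty,B_\beta)$, so $u$ is smooth and positive along it, and $x_0$ (if of bubble type) lies off this segment since $u$ is smooth on $V(x_\infty,B_\beta)$ and $0,\,x_\infty+sv_\infty\in V(x_\infty,B_\beta)$ for $s\in(0,|x_\infty|)$. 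Hence, if $u$ is constant or $u(x)=\frac{c_0}{|x-x_0|^{n-2}}$ with $x_0\neq x_\infty$, the function $s\mapsto u^{\frac{2}{n-2}}(x_\infty+sv_\infty)$ is bounded above and below by positive constants for $s$ near $0$, so by the mean value theorem $L_j^\infty/L_{j+1}^\infty\to 2$ as $j\to\infty$; while if $x_0=x_\infty$ one has $u^{\frac{2}{n-2}}(x_\infty+sv_\infty)=c_0^{\frac{2}{n-2}}s^{-2}$ and the ratio equals $\frac{1}{2}$ (this is also where the functions $F_1,F_2$ and Corollary \ref{F} enter in the more general computations of the proof of Lemma \ref{cone}). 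Since the first two situations give a ratio exceeding $\frac{3}{2}$ for $j$ large, they are impossible, and we conclude $u(x)=\frac{c_0}{|x-x_\infty|^{n-2}}$, which is \eqref{u.b.}. The crux is the second paragraph: hypothesis $(3)$ controls $u_k$ only near the moving vertex $x_k$, where there is no convergence to $u$, and the scale-propagation of Lemma \ref{cone} is precisely the bridge that turns this into an estimate on the dyadic ratios of the limit $u$ on $V(x_\infty,B_\beta)$; one must take care that for every fixed $k$ all the infinite length sits near $x_k$ while a fixed dyadic scale stays inside the region of convergence.
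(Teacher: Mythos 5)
Your proof is correct and takes essentially the same route as the paper's --- classification of the Ricci flat limit via Lemma \ref{curvature.convergence} and Proposition \ref{bble}, dyadic length ratios along the segment toward the vertex, the scale-propagation Lemma \ref{cone} under rescaling, and the trace theorem --- differing only in bookkeeping: you keep the moving vertex $x_k$ rather than performing the paper's normalization $x_k\equiv x_\infty$ (its $\delta_k,\sigma_k$ reduction), and you contrapose the order of the argument, using hypothesis $(3)$ to force all dyadic ratios of $\hat g_k$ below $\frac32$ and then passing to the limit, whereas the paper transfers the limit's ratio $>\frac32$ at one fixed scale to $\hat g_k$ and propagates it downward to contradict $(3)$. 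Two harmless glosses worth a line each: your reason for discarding $x_\infty=0$ is not quite right (hypothesis $(3)$ concerns the segments from $x_k\neq0$, so it is not vacuous there; but the paper's own proof likewise implicitly assumes $x_\infty\neq0$, as does its application), and the convergence $L_j^{(k)}\to L_j^{\infty}$ along moving segments should be justified by composing $u_k$ with affine maps carrying $(x_\infty,v_\infty)$ to $(x_k,v_k)$ before invoking the fixed-segment trace theorem.
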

\begin{proof}
Obviously, it suffices to show \eqref{u.b.} holds
on $B_{\tau\beta}$  any fixed $\tau\in(0,1)$.

Let 
$$
g_k'=\frac{1}{\de_k^2}(\sigma_k^*(g_c^{-1}{g}_k)),\s and\s
\hat{g}_k'=\frac{1}{\de_k^2}(\sigma_k^*(g_c^{-1}\hat{g}_k))=:(u_k')^\frac{4}{n-2}g_k',
$$
where $\delta_k=\frac{|x_\infty|}{|x_k|}$ and $\sigma_k\in O(n)$, which converges to identity mapping and transforms $\delta_kx_k$ to $x_\infty$. It is easy to check that $g_k'$ converges to $g_{euc}$ and $\hat{g}'_k$ satisfies (1)-(3) with $x_k$
replaced by $x_\infty$ and $\beta$ replaced by $\tau\beta$. Moreover, $u_k$ converges to $u(x)=\frac{c_0}{g_c(x-x_\infty,x-x_\infty)^{\frac{n-2}{2}}}
$ on $V(x_\infty, B_\beta)$ if and only if $u_k'$ converges to
$\frac{c_0'}{|x-x_\infty|^{n-2}}$ on $V(x_\infty, B_{\tau\beta})$.

Without loss of generality, we may assume $g_c\equiv g_{euc}$ and $x_k\equiv x_\infty$.
By Lemma \ref{curvature.convergence} and Proposition \ref{bble}, either $u$ is a constant function or $u=\frac{c_0}{|x-y|^{n-2}}$ for some $y\notin V(x_\infty,B_\beta)$.

By Corollary \ref{convergence}, we can find a sequence of constants $\{c_k\}$, such that $c_ku_k$
converges to a function $v$ in $W^{2,q}_{loc}(V(x_\infty,B_{\tau\beta}))$. Since $u_k$
converges to $u$ in $W^{2,q}(B_{\tau\beta})$, we may set $c_k=1$ and
$v=u$.

Suppose $u\neq\frac{c_0}{|x-x_\infty|^{n-2}}$, which implies that $u$ is continuous at $x_\infty$. Put $g_\infty=u^\frac{4}{n-2}g_{euc}$, and
$$
L_m(g)=\text{Length}((1-t)x_\infty|_{[2^{-m-1},2^{-m}]},g).
$$
We have
$$
\lim_{m\rightarrow+\infty}\frac{L_m(g_\infty)}{L_{m+1}(g_\infty)}=2,
$$
since $G(t)\to 1$ as $t\to 0^+$.

By the trace theorem, we may find $k_0$ and $m_0$, such that
$$
\frac{L_{m_0}(\hat{g}_k)}{L_{m_0+1}(\hat{g}_k)}>\frac{3}{2},\s \forall k>k_0.
$$
By using a rescaling argument and applying Lemma \ref{cone}, we get
$$
\frac{L_{m_0+1}(\hat{g}_k)}{L_{m_0+2}(\hat{g}_k)}>\frac{3}{2}
$$
when $k$ is sufficiently large.
Inductively, we obtain
$$
\frac{L_{m_0+i}(\hat{g}_k)}{L_{m_0+i+1}(\hat{g}_k)}>\frac{3}{2}
$$
for any $i>1$,
hence 
$$
\text{Length}((1-t)x_\infty|_{(0,2^{-m_0}]},\hat{g}_k)\leq
\sum_{i=0}^\infty\left(\frac{2}{3}\right)^iL_{m_0}(\hat{g}_k)<+\infty,
$$
which contradicts the assumption $(3)$.
\end{proof}

\section{Proof of main Theorems}

In this section, we give the proof of Theorem \ref{main} and  Theorem \ref{Q-curvature}. First of all, we provide a sufficient condition that guarantees a closed subset to be finite.

\begin{lem}\label{EG}
Let $E$ be a closed subset of $B_1\subset \Real^n$ with $n\geq 3$.
We assume that
$$
0\in E,\s B_\frac{1}{4}\setminus E\neq\emptyset.
$$
We further assume that there does not exist
$x_0$, $x_k$, $y_k$, and $z_k\in B_\frac{1}{2}$, such that
\begin{itemize}
\item[$(1)$] $x_k$, $y_k\in E$, $z_k\notin E$ and $x_k,y_k,z_k\rightarrow x_0$;

\item[$(2)$] there exists $a>1$, such that
$$
\frac{1}{a}r_k<|x_k-z_k|,|y_k-z_k|<ar_k,
$$
where $r_k=|x_k-y_k|$;

\item[$(3)$] there exists $\beta>0$, such that
$$
V(x_k,B_{\beta r_k}(z_k)),V(y_k,B_{\beta r_k}(z_k))\subset B_1\setminus E.
$$
\end{itemize}
Then $E\cap B_\frac{1}{4}$ has finitely many points.
\end{lem}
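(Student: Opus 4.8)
The plan is to prove the contrapositive: assuming $E\cap B_{1/4}$ is infinite, I will exhibit $x_0\in B_{1/2}$ and sequences $x_k,y_k,z_k\in B_{1/2}$ satisfying $(1)$--$(3)$. First I would locate a useful accumulation point of $\partial E$. I claim $\partial E\cap\overline{B_{1/4}}$ is infinite: if it were a finite set $\{q_1,\dots,q_m\}$, then $W:=B_{1/4}\setminus\{q_1,\dots,q_m\}$ is connected (removing finitely many points from a ball in $\R^n$, $n\geq 2$, leaves it connected) and disjoint from $\partial E$, hence $W$ lies in the union $\mathrm{int}(E)\sqcup E^{c}$ of two disjoint open sets and so in one of them; $W\subset\mathrm{int}(E)$ would force the nonempty open set $B_{1/4}\setminus E$ to be finite, and $W\subset E^{c}$ would force $E\cap B_{1/4}\subset\{q_1,\dots,q_m\}$ — both impossible. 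So $\partial E\cap\overline{B_{1/4}}$ has a limit point $x_0$, and $x_0\in\partial E\cap\overline{B_{1/4}}\subset B_{1/2}$; in particular $E$, $E^{c}$, and $\partial E$ all accumulate at $x_0$.

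Next I would build the cones via the nearest-point projection $\pi_E$. For $z\in E^{c}$, writing $d(z)=\mathrm{dist}(z,E)>0$ and taking any nearest point $p\in E$, one has $B_{d(z)}(z)\subset E^{c}$, and for every $\beta<1/2$ the cone $V(p,B_{\beta d(z)}(z))$ lies in $B_{d(z)}(z)\subset E^{c}$: a point $(1-t)p+tw$ with $|w-z|<\beta d(z)$, $t\in(0,1)$, is within $(1-t)d(z)+t\beta d(z)<d(z)$ of $z$. Thus, picking any $z_k\to x_0$ in $E^{c}$ and setting $x_k:=\pi_E(z_k)$, we immediately get $x_k\in E$, $x_k\to x_0$, and a clearing cone at the apex $x_k$. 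It remains to produce a second apex $y_k\in E$ with $r_k:=|x_k-y_k|$ comparable to $d(z_k)$ and $V(y_k,B_{\beta r_k}(z_k))\subset E^{c}$; once $r_k\asymp d(z_k)$ the remaining distances in $(2)$ are controlled for free, since $|x_k-z_k|=d(z_k)$ and $|y_k-z_k|\geq d(z_k)$ because $y_k\in E$.

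For the second apex I would use a dichotomy on the local structure of $E$ near $x_0$. \emph{Case (i):} there is a sequence $z_k\to x_0$ in $E^{c}$ each possessing two nearest points $x_k,y_k\in E$ with $|x_k-y_k|\geq \tfrac1a\,d(z_k)$ for some fixed $a>2$; this happens, for example, whenever $\mathrm{dist}(\cdot,E)$ has interior local maxima arbitrarily close to $x_0$, because at an interior maximum $z$ the origin lies in the convex hull of the unit vectors from $z$ to its nearest points, forcing two of those points to be at distance $\geq\sqrt 2\,d(z)$. In this case $x_k$ and $y_k$ are \emph{both} nearest points of $z_k$, so by the previous paragraph both cones $V(x_k,B_{\beta r_k}(z_k))$ and $V(y_k,B_{\beta r_k}(z_k))$ lie in $B_{d(z_k)}(z_k)\subset E^{c}$ (using $r_k\leq 2\,d(z_k)$ and $\beta<1/2$), and $(1)$--$(3)$ hold. \emph{Case (ii):} otherwise, the nearest point of every $z\in E^{c}$ close to $x_0$ is essentially unique at scale $d(z)$, so $E$ near $x_0$ behaves like a set of (asymptotically) positive reach; its boundary points then carry one-sided tangent/supporting hyperplanes, and I would take two points $x_k,y_k\in\partial E$ near $x_0$ with $r_k:=|x_k-y_k|$ comparable to their distances to $x_0$, together with a point $z_k\in E^{c}$ at the same scale $r_k$ placed transverse to $E$ (past the supporting side by a large fixed multiple of $r_k$, in a perpendicular direction when $E$ is locally lower-dimensional), so that both segments $[x_k,z_k]$ and $[y_k,z_k]$ leave $E$ off their $E$-endpoints and, by the built-in margin (with $\beta$ small), so do the full $\beta r_k$-cones — exactly as in the model cases where $E$ is a half-space, a plane, or a convex cone, the general situation following since over a ball of radius $O(r_k)$ the set $E$ deviates from its tangent object by $o(r_k)$.

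In either case $x_k,y_k,z_k$ converge to a single point of $\overline{B_{1/4}}$, so after discarding finitely many indices they all lie in $B_{1/2}$ and $(1)$--$(3)$ hold with uniform constants $a>1$ and $\beta\in(0,1/2)$; this contradicts the hypothesis, hence $E\cap B_{1/4}$ is finite. I expect Case (ii) to be the main obstacle: one must convert the failure of Case (i) — ``the medial axis of $E^{c}$ does not accumulate at the natural scale near any limit point of $\partial E$'' — into a genuinely usable one-sidedness statement that simultaneously furnishes two boundary points and a complement point at a common scale with both cones transverse to $E$. Equivalently, one needs the quantitative geometric fact that a non-discrete closed subset of $\R^n$ always admits, at arbitrarily small scales near an accumulation point of its boundary, two boundary points wedging an open Euclidean cone of fixed aperture into the complement.
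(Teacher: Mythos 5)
Your contrapositive set-up is fine: the argument that $\partial E\cap\overline{B_{1/4}}$ must be infinite, the nearest-point observation that $V(p,B_{\beta d(z)}(z))\subset B_{d(z)}(z)\subset E^{c}$ for $\beta<1/2$, and Case (i) (at an interior local maximum of $\mathrm{dist}(\cdot,E)$ two footpoints subtend an angle $\geq\pi/2$ at $z$, so both cones are free) are all correct in spirit. But Case (ii) is a genuine gap, and you say so yourself: the entire content of the lemma is the ``quantitative geometric fact'' you defer — that near an accumulation point of a closed set one can always find, at arbitrarily small scales, two points of $E$ and a complementary ball at comparable distance such that \emph{both} cones miss $E$. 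The failure of Case (i) does not give $E$ positive reach, does not produce supporting hyperplanes, and does not yield the $o(r_k)$ closeness to a ``tangent object'' that your half-space/plane/convex-cone model argument needs: a closed set whose footpoints are (nearly) unique at the relevant scale can still have no blow-up limit at all, or limits that change from scale to scale, so the sentence ``the general situation following since over a ball of radius $O(r_k)$ the set $E$ deviates from its tangent object by $o(r_k)$'' is unjustified. As written, the proposal does not prove the lemma.

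For comparison, this unproved step is exactly where the paper works. It first splits $E$ into its isolated part $E_0$ and the rest $E_1=E\setminus E_0$. For $E_0$ it argues much as in your Case (i), but more simply: with $r_k=\mathrm{dist}(x_k,E\setminus\{x_k\})$, a nearest point $y_k$, and $z_k$ the midpoint, both cones lie in $B_{r_k}(x_k)\setminus\{x_k\}\subset E^{c}$, so no medial-axis discussion is needed. For $E_1$, it projects a nearby exterior point $p_0$ onto $E_1$ to get $\hat p_0$ together with a whole truncated cone $\overline{C_{\pi/4}(v)}\cap B_\tau$ meeting $E_1$ only at $\hat p_0$; then, if points $p_k\in E_1$ approach $\hat p_0$ inside $\overline{C_{3\pi/4}(v)}$, it chooses the second apex $y_k$ as a point of $E$ of \emph{maximal height} in a thin vertical cylinder over $p_k'$ (so the cone from $y_k$ toward a ball placed above the cylinder is automatically $E$-free by maximality), and takes $x_k=\hat p_0$ with the other cone inside the empty cone $\overline{C_{\pi/4}(v)}$; if instead no points of $E_1$ approach within $\overline{C_{3\pi/4}(v)}$, it rotates to an orthogonal direction $v'$ and repeats the same construction. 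Some device of this kind — producing the second apex by an extremal (sup-height) selection rather than by a regularity/tangency assumption on $E$ — is what your Case (ii) is missing.
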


\proof
We define
\[E_0=\{x\in E|\,\exists \, r>0\,\, \text{such that}\,\, B_r(x)\cap E=\{x\}\}\]
and
$$
E_1=E\setminus E_0.
$$
Obviously, $E_0$ is a countable discrete set,
and $E_1$ is also a closed subset of $B_1$.

We first show $E_0\cap B_\frac{1}{2}$ is a finite set, then prove $E_1\cap B_\frac{1}{4}=\emptyset$.

\medskip
\noindent\emph{Step 1: We prove that $E_0\cap B_\frac{1}{2}$ is finite.} 
If $E_0\cap B_\frac{1}{2}$ is not finite, then $E_0\cap B_\frac{1}{2}$ has at least one limit point $p$
in $\overline{B}_\frac{1}{2}$, namely there exists a sequence $\{x_k\}\subset E_0\cap B_\frac{1}{2}$,
such that
\[\lim_{k\to \infty}x_k=p.\]

Define $r(x): E_0\to \Real^{+}$ as
\[r(x)=\inf_{y\in E\setminus\{x\}}|y-x|.\]
Since $r_k:=r(x_k)\to 0$ and $E\setminus\{x\}$ is closed, there exists a sequence $\{y_k\}$ in $E\setminus\{x_k\}$ such that
\begin{eqnarray}
\label{eq3}r_k=|x_k-y_k|.
\end{eqnarray}
Let $z_k=\frac{x_k+y_k}{2}$, $x_0=p$, and $\beta=\frac{1}{4}$. Then $x_0$, $x_k$, $y_k$ and $z_k$ satisfies (1)-(3). We get a contradiction.\\

\noindent\emph{Step 2: We claim that $E_1\cap B_\frac{1}{4}=\emptyset$.}
We prove this claim by contradiction. Assume there exits $x_0\in \partial (B_1\setminus E_1)\cap B_\frac{1}{4}$.
Choose $\delta<\frac{1}{4}$, such that $B_{\delta}(x_0)\cap E_0=\emptyset$.

We take a point $p_0\in B_\frac{\delta}{4}(x_0)\setminus E_1$.
Since $E_1$ is closed, and $d(p_0,E_1)\leq |p_0-x_0|<\frac{\delta}{4}$,
there exists a point $\hat p_0\in E_1$ such that \[\mbox{dist}(p_0,E_1)=\mbox{dist}(p_0,\hat p_0).\]
Since
$$
\mathrm{dist}(\hat p_0,E_0)\geq \mathrm{dist}(x_0,E_0)-\mathrm{dist}(x_0,\hat p_0)> \frac{1}{4}\delta,
$$
we have
$$
B_\frac{\delta}{4}(\hat p_0)\cap E=B_\frac{\delta}{4}(\hat p_0)\cap E_1.
$$

After changing coordinates, we may assume $\hat{p}_0=0$ and $v=(p_0-\hat{p}_0)/|p_0-\hat{p}_0|=(0,\cdots,0,1)$.
It is easy to check that, we can choose a $\tau\in(0,\frac{\delta}{4})$ such that
\[\overline{C_{\frac{\pi}{4}}(v)}\cap B_{\tau}(0)\cap E_1=\{0\}.\]
By the definition of $E_1$, we can choose a sequence $\{p_k\}$ in $E_1$ such that
\[p_k\to 0,\quad\text{as}\quad k\to \infty.\]
The rest of the proof will be divided into two cases:

\noindent\emph{Case 1: We assume that there exists a sequence $\{p_k\}\subset E_1$, which is contained in $\overline{C_{\frac{3\pi}{4}}(v)}$, and converges to 0.}
In this case, we set $p_k=(p_k',t_k)$, where $p_k'\in\R^{n-1}$ and $t_k\in\R$. We denote $r_k'=|p_k'|$ and define
$$
s_k=\sup\{s:\exists y'\mbox{ such that }(y',s)\in (\overline{B^{n-1}_\frac{r_k'}{2}(p_k')}\times\R)
\cap \overline{C_{\frac{3\pi}{4}}(v,0)\setminus{C}_{\frac{\pi}{4}}(v,0)}\cap E\}.
$$
Assume $y_k=(y_k',s_k)$ is a corresponding point such that the $n$-th coordinate attains the supremum
defined above.

Let $z_k=(p_k',2r_k')$. Then for any $x\in B_\frac{r_k'}{4}(x_k)$, the segment $\overline{xy_k}$ is still in
$\overline{B_\frac{r_k'}{2}^{n-1}(p_k)}\times\R$, hence the segment $\overline{xy_k}\cap E=\{y_k\}$, otherwise it  contradicts the definition of $s_k$. Then, we have
$$
V(y_k,B_\frac{r_k'}{4}(z_k))\cap E=\emptyset.
$$
Let $x_k=0$. It is easy to check that
$$
V(x_k,B_\frac{r_k'}{4}(z_k))\cap E=\emptyset.
$$
On the other hand, since $\frac{r_k'}{2}\leq r_k=|x_k-y_k|\leq 3r_k'$, then
$(x_k, y_k, z_k)$ satisfies (1)-(3) with
$\beta=\frac{1}{12}$ as shown in Figure 1. We get a contradiction.

\begin{center}
\scalebox{0.7}{
\begin{tikzpicture}
	\filldraw[black,fill opacity=0.05](0,0)--(7,-7)--(14,0);
	\draw (0,-7)--(14,-7);
	\draw[->] (7,-11)--(7,0.2);
	\draw (0,0)--(11,-11);
	\draw (3,-11)--(14,0);
	\draw (8.5,-0)--(8.5,-11);
	\draw (11.5,-0)--(11.5,-11);
	
	\draw[line width=1pt] plot[smooth]coordinates{(3,-7.1)(4,-6.5)(7,-7)(10,-8)(10.7,-4.5)};
	
	\draw[line width=1pt] plot[smooth]coordinates{(10.7,-4.5)(11.2,-5)(12,-6.5)};
	
	\draw(10,-1) ellipse (.75 and 0.75);
	
	\filldraw (10,-1) circle (0.05);
	\filldraw (10,-8) circle (0.05);
	\filldraw (10,-7) circle (0.05);
	\filldraw (10.7,-4.5) circle (0.05);
	
	\draw[dashed](8.5,-4.5)--(11.5,-4.5);
	\draw (10.7,-4.5)--(10.78,-0.5);
	\draw (10.7,-4.5)--(9.03,-0.7);
	\draw (7.5,-7) arc (0:45:.5);
	\draw (7.6,-7) arc (0:-45:.6);
	\draw[->] (8.5,-0.1)--(9.5,-0.1);
	\draw[->] (11.5,-0.1)--(10.5,-0.1);
	\draw[dashed] (10,-1)--(10,-8);
	
	
	\draw (7.7,-6.7) node {$\frac{\pi}{4}$};
	\draw (7.8,-7.4) node {$\frac{\pi}{4}$};
	
	\draw (10,-8.4) node[font=\fontsize{11pt}{0}] {$p_k$};
	\draw (10,-7.4) node[font=\fontsize{11pt}{0}] {$(p_k',0)$};
	\draw (12.2,-4.2) node[font=\fontsize{11pt}{0}] {$y_k=(y_k',s_k)$};
	\draw (13.8,-4.8) node[font=\fontsize{11pt}{0}] {$s_k$ attains the supremum};
	\draw (13.6,-5.5) node[font=\fontsize{11pt}{0}] {on $E\cap \overline{B_{\frac{r_k'}{2}}^{n-1}(p_k')}\times \mathbb{R}$};
	\draw (10,-0.1) node[font=\fontsize{11pt}{0}] {$r_k'$};
	\draw (12,-1.1) node[font=\fontsize{11pt}{0}] {$B_\frac{r_k'}{4}((p_k',2r_k'))$};
	\draw (4,-6.3) node[font=\fontsize{11pt}{0}] {$E$};
	\draw (7.2,-0.5) node[font=\fontsize{11pt}{0}] {$v$};
\end{tikzpicture}}

{\bf Figure 1}
\end{center}

\medskip
{\noindent\emph{Case 2: $E_1\cap \overline{C_{\frac{3\pi}{4}}(v)}\cap B_r=\{0\}$ for some $r$.}}
In this case, as shown in Figure 2, we set
$v'=(0,\cdots,0,1,0)$.

For any $x=|x|(\theta^1,\cdots,\theta^n)\notin \overline{C_{\frac{3\pi}{4}}(v)}$, we have
$\theta^n<-\frac{\sqrt{2}}{2}$, which yields that
$\theta^{n-1}\in(-\frac{\sqrt{2}}{2},\frac{\sqrt{2}}{2})$. Therefore, if
$x\notin \overline{C_{\frac{3\pi}{4}}(v)}$, then
$x\in C_\frac{3\pi}{4}(v')\setminus \overline{C_\frac{\pi}{4}(v')}$. Then
$B_r\cap C_{\frac{\pi}{4}}(v')\cap E=\emptyset$
and there exists $p_k'\in C_\frac{3\pi}{4}(v')\cap E$, such that $p_k'\rightarrow 0$. This  has been discussed
in case 1.
\endproof

\begin{center}
\begin{tikzpicture}[scale=0.5]
\draw[dashed] (7,-7)--(3,-3);
\draw[->,dashed] (7,-7)--(0,-7);
\draw (3,-11)--(7,-7)--(11,-11);
\draw[->] (7,-7)--(7,-1);

\draw[line width=1pt] plot[smooth]coordinates{(3,-11.1)(5,-10)(6,-9)(7,-7)};
\draw[line width=1pt] plot[smooth]coordinates{(7,-7)(8,-9)(9,-10)(10,-11)};

\draw (5,-11) node {$E$};
\draw (1,-6.5) node {$v'$};
\draw (7.5,-2) node {$v$};
\end{tikzpicture}

{\bf Figure 2}\\
\end{center}

Now, we are ready to prove our main theorems.

\begin{proof}[\bf{Proof of Theorem \ref{main}}]We only need to
prove Theorem \ref{main} in local coordinates of $M$ around a point $p_0\in \partial\Omega$.
Without loss of generality, we assume $B_1$ lies in the coordinate chart and $p_0=0$,  and set $E=(M\setminus \Om)\cap B_1$.

Suppose $E\cap B_\frac{1}{4}$ is an infinite set.
By Lemma \ref{EG}, we can find $x_0$ $x_k$, $y_k$, and $z_k\in B_\frac{1}{2}$, satifsying (1)-(3) in Lemma \ref{EG}.

We set
$$
g_k=g_{ij}(z_k+r_kx)dx^i\otimes dx^j,\s
u_k(x)=r_k^\frac{n-2}{2}u_k(z_k+r_kx),\s
g_k'=u_k^\frac{4}{n-2}g_k.
$$
Then
$g_k$ converges to a constant metric $g_c$ and
$\|Ric_{g_k'}\|_{L^\frac{n}{2}}\rightarrow 0$.

Assume that
$$
x_k'=(x_k-z_k)/r_k\rightarrow x_\infty,\s
y_k'=(y_k-z_k)/r_k\rightarrow y_\infty,
$$
as $k\to \infty$. Since $|x_k'-y_k'|=1$, $x_\infty\neq y_\infty$.

By Corollary \ref{convergence}, there exist a sequence of constants $\{c_k\}$ such that $c_ku_k$ weakly converges to a positive function $u_\infty$ in
 $W^{2,q}_{loc}(V(x_\infty, B_\beta)\cup V(y_\infty, B_\beta))$ for any $q\in[1, \frac{n}{2})$.

Hence by using
Theorem \ref{uni-bubble} both
on $V(x_k',B_\beta)$ and on $V(y_k',B_\beta)$, we get
$$
u_\infty=
\frac{c_0}{g_c(x-x_\infty,x-x_\infty)^\frac{n-2}{2}}=
\frac{c^\prime_0}{g_c(x-y_\infty,x-y_\infty)^\frac{n-2}{2}}
$$
for some constants $c_0$ and $c^\prime_0$,
which leads to a contradiction with $x_\infty\neq y_\infty$.\\
\end{proof}

By Lemma \ref{Q}, when $n=4$, Theorem \ref{uni-bubble} still holds if we replace (2)
with
$$
\int_{V(x_k,B_\beta)}(|R_{\hat{g}_k}|^2dV_g+|Q_{\hat{g}_k}^{-}|)dV_{\hat{g}_k}\rightarrow 0.
$$
Then the proof of Theorem \ref{Q-curvature} is almost the same as
that of Theorem \ref{main}, hence we omit it.


\begin{center}{\bf  Acknowledgements}\end{center}
 
 We are grateful to the referee for  helpful corrections and highly constructive suggestions which very substantially improved 
the exposition.

\end{document}